\newcommand\Z{{\mathbb Z}}
\newcommand\R{{\mathbb R}}
\newcommand\C{{\mathbb C}}
\newcommand\grad{{ \nabla }}
\begin{document}
	
	\newtheorem{example}{Example}[section]
	\newtheorem{lemma}{Lemma}[section]
	\newtheorem{thm}{Theorem}
	\newtheorem{prop}[lemma]{Proposition}
	\newtheorem{cor}{Corollary}[section]
	
	\title[Weighted Estimates for Rough Bilinear Operators]{Weighted Estimates for Rough Bilinear Singular Integrals via Sparse Domination}
	\author{Alexander Barron}
	\maketitle
	
	\newcommand{\Addresses}{{
			\bigskip
			\footnotesize

			\textsc{Department of Mathematics, Brown University,
				Providence, RI 02906, USA}\par\nopagebreak
			\textit{E-mail address}: \texttt{alexander\_barron@brown.edu}

		}}

		\begin{abstract}
			We prove weighted estimates for rough bilinear singular integral operators with kernel $$K(y_1, y_2) = \frac{\Omega((y_1,y_2)/|(y_1,y_2)|)}{|(y_1, y_2)|^{2d}},$$ where $y_i \in \R^{d}$ and $\Omega \in L^{\infty}(S^{2d-1})$ with $\int_{S^{2d-1}}\Omega d\sigma = 0.$ The argument is by sparse domination of rough bilinear operators, via an abstract theorem that is a multilinear generalization of recent work by Conde-Alonso, Culiuc, Di Plinio and Ou \cite{roughSparse}. We also use recent results due to Grafakos, He, and Honz\'{\i}k \cite{BiRough} for the application to rough bilinear operators. In particular, since the weighted estimates are proved via sparse domination, we obtain some quantitative estimates in terms of the $A_{p}$ characteristics of the weights. The abstract theorem is also shown to apply to multilinear Calder\'{o}n-Zygmund operators with a standard smoothness assumption. Due to the generality of the sparse domination theorem, future applications not considered in this paper are expected.  
		\end{abstract}
		\section{Introduction}

		The study of rough singular integral operators dates back to Calder\'{o}n and Zygmund's classic papers \cite{CZ} and \cite{CZ2}. In \cite{CZ2} the authors proved that if $\Omega \in L\log L(S^{n-1})$ with $\int_{S^{n-1}} \Omega \ d\sigma = 0$, then the operator $$R_{\Omega}f(x) := \text{p.v. } \int_{\R^{n}} \frac{\Omega(y/|y|)}{|y|^{n}} f(x-y) dy$$ is bounded on $L^{p}(\R^{n})$ for $1 < p < \infty$. Hoffman \cite{Hof} and  Christ and Rubio de Francia \cite{Christ}, \cite{ChristRubio} proved such operators are bounded from $L^{1} \rightarrow L^{1,\infty}$ for small dimensions, and weak-$(1,1)$ boundedness in arbitrary dimensions was later proved by Seeger \cite{Seeger} and Tao \cite{Tao} (in a more general setting). The weighted theory for such operators was developed during this time as well; for some examples, see the work by Duoandikoetxea \cite{Duo1} and Watson \cite{Watson}. More recently, Conde-Alonso, Culiuc, Di Plinio, and Ou \cite{roughSparse} have shown that the bilinear form associated to a rough operator $R_{\Omega}$ can be bounded by \textit{positive sparse forms} (defined below), proving quantitative weighted estimates for $R_{\Omega}$ as an easy corollary. Also see the recent work by Hyt\"{o}nen, Roncal, and Tapiola \cite{HRT}, where the authors establish quantitative estimates using a different method. 
		
		We are interested in the bilinear analogues of the operators $R_{\Omega}$. The study of these operators originates with work by Coifman and Meyer \cite{CoifMey}. Suppose $\Omega\in L^{q}(S^{2d-1})$ for some $q > 1$ with $\int_{S^{2d-1}}\Omega \ d\sigma = 0$, and define the rough bilinear operator \begin{equation}\label{roughBiDef}T_{\Omega}(f_{1}, f_{2})(x) = \textrm{p.v. } \int_{\R^{d}}\int_{\R^{d}} f_{1}(x - y_{1})f_{2}(x - y_{2})\frac{\Omega((y_1, y_2)/|(y_1, y_2)|)}{|(y_1, y_2)|^{2d}} \ dy_1 dy_2.\end{equation} Grafakos, He, and Honz\'{\i}k \cite{BiRough} have proved using a wavelet decomposition that if $\Omega \in L^{\infty}(S^{2d-1})$ then $$\|T_{\Omega}\|_{L^{p_1}(\R^{d}) \times L^{p_2}(\R^{d}) \rightarrow L^{p}(\R^{d})} < \infty$$ when $1< p_1, p_2 < \infty$ and $\frac{1}{p_1} + \frac{1}{p_2} = \frac{1}{p}.$ Also see \cite{BiRough} for references to earlier work. In this paper we develop a weighted theory for the rough bilinear operators $T_{\Omega}$, using a multilinear generalization of the sparse domination theory from \cite{roughSparse} along with the results by Grafakos, He, and Honz\'{\i}k. Below we briefly review some elements of the sparse domination theory, and then state the main results of the paper.
		
		Recall that a collection of cubes $\mathcal{P}$ is said to be $\eta$-\textit{sparse} if for each $Q\in \mathcal{P}$ there is $E_Q \subset Q$ such that $|E_Q| \geq \eta|Q|$, and such that $E_Q\cap E_{Q'} = \emptyset$ when $Q \neq Q'$ (here $0 < \eta < 1$). We will work with \textit{positive sparse forms}. Let $\mathcal{S}$ be a sparse collection of cubes in $\R^{d}$ and suppose $\vec{p} = (p_1, p_2,...,p_{m+1})$ is an $(m+1)$-tuple of exponents. We define the form $$\textrm{PSF}_{\mathcal{S}}^{\vec{p}}(f_{1},...,f_{m+1}) := \sum_{Q\in \mathcal{S}} |Q|\prod_{i=1}^{m+1}(f_i)_{p_{i}, Q}, $$ where $$(f)_{q, Q} = |Q|^{-1/q}\|f1_{Q}\|_{q}.$$ These operators were initially studied in \cite{BFP}, \cite{MultiSparse}, \cite{DyShift}, motivated by earlier pointwise estimates in the sparse domination theory (see \cite{Lacey}, \cite{Lerner1}, \cite{Lerner2}, and \cite{DyCalc} for some examples). Techniques involving positive sparse forms have lead to several interesting results, and can overcome some technical obstacles appearing in earlier work. For example, one can avoid the reliance on maximal truncation estimates present in \cite{Lacey}, \cite{Lerner2} by using sparse form methods. See \cite{roughSparse}, \cite{KrauseLacey} \cite{LaceyArias}, and \cite{LaceySpencer} for examples of recent developments. A common theme in the theory, present in this paper as well, is that sparse bounds yield quantitative weighted estimates for operators as straightforward corollaries. 
		
		Our main result is the following theorem. 
		
		\begin{thm}\label{MainThm} Suppose $T_{\Omega}$ is the rough bilinear singular integral operator defined in \eqref{roughBiDef}, with $\Omega \in L^{\infty}(S^{2d-1})$ and $\int_{S^{2d-1}} \Omega = 0$. Then for any $1 < p < \infty$, there is a constant $C_p > 0$ so that $$|\langle T_{\Omega}(f_1,f_2), f_3 \rangle|  \leq C_{p}\|\Omega\|_{L^{\infty}(S^{2d-1})}\sup_{\mathcal{S}} \textrm{PSF}_{\mathcal{S}}^{(p,p,p)}(f_{1},f_2, f_3).$$ 
		\end{thm}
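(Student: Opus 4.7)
The plan is to apply an abstract multilinear sparse domination theorem --- the bilinear version of the Conde-Alonso--Culiuc--Di Plinio--Ou machinery announced in the introduction --- to the specific operator $T_\Omega$. Such an abstract theorem would reduce the problem to verifying two kinds of inputs for the smooth annular pieces of $T_\Omega$: a uniform single-scale $L^{p_1}\times L^{p_2}\to L^{p}$ bound, and a summable off-diagonal decay estimate. I would then verify these inputs using the wavelet decomposition of Grafakos--He--Honz\'ik.

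First, I would decompose the kernel dyadically. Fix a smooth radial partition of unity $1=\sum_{j\in\Z}\psi_j(|(y_1,y_2)|)$ with $\psi_j$ supported in the annulus $|(y_1,y_2)|\sim 2^j$, and write $T_\Omega=\sum_{j\in\Z}T_j$, where $T_j$ is the bilinear operator with kernel $\psi_j K$. The cancellation $\int\Omega\,d\sigma=0$ will play its usual role once these pieces are paired against wavelet packets.

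Second, I would establish the two single-scale ingredients. For the uniform bound, I would adapt the argument of Grafakos--He--Honz\'ik: their wavelet expansion of $\Omega$ on $S^{2d-1}$ localizes $T_j$ into a superposition of bilinear operators whose kernels factor through smooth frequency-localized pieces, and I would check that their estimates persist for a single annular piece $T_j$ with constants uniform in $j$, giving $\|T_j\|_{L^{p_1}\times L^{p_2}\to L^p}\lesssim \|\Omega\|_{L^\infty}$ for all admissible $(p_1,p_2,p)$. For the off-diagonal ingredient, I would test $T_j$ against wavelet packets $w_1,w_2,w_3$ localized at dyadic scales $2^{k_1},2^{k_2},2^{k_3}$ and establish geometric decay of $|\langle T_j(w_1,w_2),w_3\rangle|$ in the separation $\max_i|k_i-j|$. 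The mean-zero condition plus cancellation of the wavelet packets, coupled with the $L^2\times L^2\to L^1$ Plancherel-type bound, should yield the required decay after interpolating against the uniform bound.

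Third, with these inputs in hand, I would apply the abstract sparse domination theorem to deduce the $(p,p,p)$ bound for a fixed $p_0$ strictly greater than $1$; to cover the full range $1<p<\infty$, I would iterate or reinterpolate the off-diagonal decay, losing an arbitrarily small amount in the exponents, which amounts to pushing $p_0$ down toward $1$.

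The main obstacle will be the off-diagonal decay step, since the uniform single-scale bound essentially already appears in \cite{BiRough}. The delicate point is that the wavelet argument there is tuned to proving boundedness, not to extracting geometric decay when all three functions sit at scales far from $2^j$, and in the bilinear setting one must track separately the three quantities $k_1-j$, $k_2-j$, $k_3-j$ while exploiting $\int\Omega\,d\sigma=0$. Handling this uniformly in the exponent tuple (so that the final range reaches every $p>1$) is where the heaviest work is expected.
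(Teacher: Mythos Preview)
Your high-level strategy matches the paper's --- apply an abstract multilinear sparse theorem and verify its hypotheses via the Grafakos--He--Honz\'ik results --- but the specific inputs you propose are not the ones the paper uses, and as written they do not plug into the abstract theorem. The paper does not work with a plain annular decomposition; it imports the full decomposition from \cite{BiRough}, which adds Littlewood--Paley localization, $K_j = \sum_i \Delta_{j-i}(\beta_i K)$. The payoff is that each piece $T^j$ is a genuine bilinear Calder\'on--Zygmund operator with $\eta$-smoothness constant $\lesssim 2^{|j|\eta}\|\Omega\|_\infty$ (Proposition~\ref{GrafCZ}) and simultaneously $\|T^j\|_{L^2\times L^2\to L^1} \lesssim 2^{-\delta|j|}\|\Omega\|_\infty$ (Proposition~\ref{GrafThm}) --- one constant \emph{growing} in $|j|$, one \emph{decaying}. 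Each of these is converted (via Proposition~\ref{cancLem} for the first, trivially for the second) into an estimate on the localized form $\Lambda_{\mathcal P}^j$ in the stopping-time spaces $\dot{\mathcal X}_p, \mathcal Y_p$, which is what Theorem~\ref{abstractThm} actually assumes in \eqref{assumptionL}. The technical core is then an interpolation \emph{in those $\mathcal X/\mathcal Y$ spaces} (Lemmas~\ref{interpolation} and~\ref{interpolation2}) balancing growth against decay, producing summability in $j$ at the cost of moving the exponent from $1$ to $1+O(\epsilon)$.

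Your proposed ``off-diagonal wavelet-packet decay'' is a different kind of input --- essentially a bilinear $T1$/testing condition --- and the paper's abstract theorem does not accept it as a hypothesis. So you would either need a different abstract theorem (which you have not stated) or a way to convert wavelet decay into the $\Lambda_{\mathcal P}$ estimates~\eqref{assumptionL}, and neither is supplied; that is the gap. Note also that your ``uniform single-scale bound'' is not what is needed: the argument relies precisely on the \emph{decay} $2^{-\delta|j|}$ of $\|T^j\|_{L^2\times L^2\to L^1}$ to win against the growing CZ constants, and this decay is only visible after the extra frequency localization, not for the bare annular pieces $\psi_j K$ you wrote down.
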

		
		\noindent Here the supremum is taken over all sparse collections $\mathcal{S}$ with some fixed sparsity constant $\eta$ that does not depend on the functions. This theorem is a consequence of a more general multilinear sparse domination result, which is stated in Section 2. As an application of Theorem \ref{MainThm} we derive weighted estimates for $T_{\Omega}$. Recall the $A_{p}$ class of weights $w$, where $w \in A_{p}$ for $1 < p < \infty$ if $w > 0$, $w \in L_{\text{loc}}^{1},$ and $$[w]_{A_{p}} := \sup_{\substack{Q\subset \R^{d} \\ \text{cubes} }} \left(\frac{1}{|Q|}\int_{Q}w \ dx \right) \left( \frac{1}{|Q|}\int_{Q} w^{-\frac{1}{p-1}} \ dx \right)^{p-1} < \infty.$$ Below we write $L^{p}(w)$ for the space $L^{p}$ with measure $w(x)dx$.

		\begin{cor}\label{MultiWeight} Fix $1 < p_1, p_2 < \infty$ and $\frac{1}{2} < p < \infty$ such that $\frac{1}{p_1} + \frac{1}{p_2} = \frac{1}{p}$. Then for all weights $(w_1^{p_1}, w_2^{p_2})$ in $(A_{p_1}, A_{p_2})$, there is a constant $C$ depending on $[w^{p_1}]_{A_{p_1}}, [w^{p_2}]_{A_{p_2}}, d, p_1$ and $ p_2$ such that $$\|T_{\Omega}(f_1, f_2)\|_{L^{p}(w_1^{p}w_2^{p})} \leq C\|f_1\|_{L^{p_1}(w_1^{p_1})}\|f_2\|_{L^{p_2}(w_2^{p_2})}$$ for all $f_i \in L^{p_i}(w_i^{p_i})$. \end{cor}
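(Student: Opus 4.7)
The plan is to deduce the corollary from the sparse bound in Theorem \ref{MainThm} by combining weighted $L^p$-duality with a standard weighted estimate for trilinear positive sparse forms. Set $V := w_1^{p}w_2^{p}$. A direct computation using $1/p+1/p' = 1$ gives $V^{1-p'} = (w_1 w_2)^{-p'}$, so the dual of $L^p(V)$ under the unweighted pairing is $L^{p'}((w_1 w_2)^{-p'})$. Consequently, the conclusion of the corollary is equivalent to the trilinear inequality
\[
|\langle T_\Omega(f_1,f_2), f_3\rangle| \leq C\,\|f_1\|_{L^{p_1}(w_1^{p_1})}\|f_2\|_{L^{p_2}(w_2^{p_2})}\|f_3\|_{L^{p'}((w_1 w_2)^{-p'})}.
\]
By Theorem \ref{MainThm} the left-hand side is controlled by $\sup_{\mathcal{S}}\textrm{PSF}^{(r,r,r)}_{\mathcal{S}}(f_1,f_2,f_3)$ for any $r > 1$ of our choosing, so it suffices to prove a weighted estimate for the trilinear positive sparse form.

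I would choose $r$ using openness of the Muckenhoupt classes. The hypothesis $w_i^{p_i}\in A_{p_i}$ yields an $\epsilon > 0$, quantitative in the characteristics, such that $w_i^{p_i}\in A_{p_i - \epsilon}$ for $i=1,2$. Fix $r > 1$ small enough that $r < \min(p_1, p_2, p')$ and $p_i/r \leq p_i - \epsilon$ for each $i$, so that $w_i^{p_i}\in A_{p_i/r}$. A parallel computation based on H\"older's inequality, analogous to the one in the first paragraph, shows that the triple $(w_1^{p_1}, w_2^{p_2}, (w_1 w_2)^{-p'})$ satisfies the multilinear Muckenhoupt-type condition compatible with the exponent triple $(p_1/r, p_2/r, p'/r)$, with joint characteristic controlled by a product of powers of $[w_1^{p_1}]_{A_{p_1}}$ and $[w_2^{p_2}]_{A_{p_2}}$.

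The remaining ingredient is the weighted trilinear sparse form estimate
\[
\sum_{Q\in\mathcal{S}}|Q|\prod_{i=1}^{3}(f_i)_{r,Q} \leq C\|f_1\|_{L^{p_1}(w_1^{p_1})}\|f_2\|_{L^{p_2}(w_2^{p_2})}\|f_3\|_{L^{p'}((w_1 w_2)^{-p'})},
\]
valid uniformly in $\mathcal{S}$ for the $r$ and weights identified above, with constant depending polynomially on the relevant $A_{p_i/r}$ characteristics. Plugging this into Theorem \ref{MainThm} and dualizing yields the corollary together with the claimed quantitative dependence on the $A_{p_i}$ characteristics. The main obstacle is this last inequality: I would invoke a multilinear analogue of the Bernicot-Frey-Petermichl estimate from \cite{BFP} and related work cited in Section 1, or, if no statement in the exact form needed is available, derive it by freezing two functions and applying the linear weighted sparse operator bound in the remaining variable, then iterating by H\"older and a duality argument in the weighted spaces. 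A secondary technical point is ensuring that the implicit constant in the openness step depends on the weights only through their $A_{p_i}$ characteristics, which is standard but must be tracked to get the precise quantitative statement.
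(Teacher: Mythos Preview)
Your approach is essentially the paper's route to Corollary~\ref{MultiWeight2} (via Lemma~\ref{MultiWeightLem}), and for $p>1$ it is correct: the trilinear sparse form estimate you need is precisely the one the paper quotes from \cite{MultiSparse}, and the verification that $(w_1^{p_1},w_2^{p_2},(w_1w_2)^{-p'})$ lies in the multilinear class $A_{(p_1,p_2,p')}^{(r,r,r)}$ is the same H\"older computation as the paper's inequality \eqref{RHest2}.

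However, there is a genuine gap. The statement allows $\tfrac{1}{2}<p<\infty$, and the range $\tfrac{1}{2}<p\le 1$ is not covered by your argument. When $p<1$ the space $L^p(V)$ is only quasi-Banach and has no useful dual; the exponent $p'$ is negative, so the sentence ``the dual of $L^p(V)$ under the unweighted pairing is $L^{p'}((w_1w_2)^{-p'})$'' is meaningless, and in particular you cannot choose $r>1$ with $r<p'$. The trilinear reformulation therefore collapses precisely in the quasi-Banach range, which is a nontrivial part of the corollary (e.g.\ $p_1=p_2=\tfrac{3}{2}$ gives $p=\tfrac{3}{4}$).

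The paper handles this by \emph{not} working at general $(p_1,p_2)$: it invokes the multilinear extrapolation theorem of Grafakos--Martell \cite{GrMart} (and Duoandikoetxea \cite{Duo2}) to reduce the full statement to the single diagonal case $p_1=p_2=3$, where $p=\tfrac{3}{2}>1$ and your duality argument (equivalently, Lemma~\ref{MultiWeightLem}) applies cleanly. So the missing ingredient in your proposal is exactly this extrapolation step; once you add it, your argument and the paper's coincide. Your alternative suggestion of ``freezing two functions and iterating the linear bound'' would also only recover the Banach range and does not by itself reach $p<1$.
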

		
		\noindent These estimates were originally proved by Cruz-Uribe and Naibo in \cite{CruzNaibo} with a different technique. Our proof uses the sparse domination from Theorem \ref{MainThm} along with methods from \cite{MultiSparse}, \cite{DyCalc}, \cite{LernerMulti} and extrapolation. Note the following special case of Corollary \ref{MultiWeight} in the single-weight case. 
		
		\begin{cor}\label{WeightedEst} Suppose $1 < q < \infty$ and $\Omega \in L^{\infty}(S^{2d-1})$ with $\int_{S^{2d-1}} \Omega \ d\sigma = 0$. Then if $w \in A_{q}$, there is a constant $C = C(w,q,\Omega)$ such that $$\|T_{\Omega}(f,g)\|_{L^{q/2}(w)} \leq C\|f\|_{L^{q}(w)} \|g\|_{L^{q}(w)} $$ for all $f,g \in L^{p}(w)$. \end{cor}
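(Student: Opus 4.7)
The plan is to derive Corollary \ref{WeightedEst} as a direct specialization of the more general Corollary \ref{MultiWeight}. First I would set the individual exponents to $p_1 = p_2 = q$, which forces the target exponent in Corollary \ref{MultiWeight} to be $p = q/2$, consistent with the statement; note $p > 1/2$ is automatic since $q > 1$. Next I would choose individual weights $w_1 = w_2 = w^{1/q}$, so that $w_1^{p_1} = w_2^{p_2} = w$. The hypothesis $w \in A_q$ then supplies exactly the weight condition $(w_1^{p_1}, w_2^{p_2}) \in (A_{p_1}, A_{p_2})$ required by Corollary \ref{MultiWeight}.

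It remains to compute the product weight appearing on the left-hand side of the estimate in Corollary \ref{MultiWeight}. One has
\[
w_1^{p} w_2^{p} = w^{p/q} \cdot w^{p/q} = w^{2p/q} = w,
\]
since $2p/q = 2 \cdot (q/2)/q = 1$. Plugging this identity into the conclusion of Corollary \ref{MultiWeight} immediately yields the desired bound
\[
\|T_\Omega(f,g)\|_{L^{q/2}(w)} \leq C\, \|f\|_{L^q(w)}\, \|g\|_{L^q(w)},
\]
with the constant $C$ depending on $[w]_{A_q}$, $d$, $q$, and $\Omega$ as claimed.

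There is essentially no obstacle in this deduction: the single-weight corollary is recorded separately only to flag the diagonal case for readers interested primarily in classical $A_q$ bounds, and it follows from Corollary \ref{MultiWeight} by nothing more than the parameter choice above. All of the substantive work lies upstream, namely in the sparse-form domination of Theorem \ref{MainThm} and in the multilinear weighted theory (sparse-form weighted inequalities together with extrapolation, following \textit{inter alia} \cite{MultiSparse}, \cite{DyCalc}, \cite{LernerMulti}) used to pass from Theorem \ref{MainThm} to Corollary \ref{MultiWeight}.
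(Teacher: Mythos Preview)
Your deduction is correct: the paper itself notes just before Corollary~\ref{WeightedEst} that it is ``the following special case of Corollary~\ref{MultiWeight} in the single-weight case,'' and your parameter choice $p_1=p_2=q$, $w_1=w_2=w^{1/q}$ is exactly the specialization needed.

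However, the paper's Section~5.1 does \emph{not} argue this way. There the author gives a separate, direct proof whose express purpose is to track the dependence of $C$ on $[w]_{A_q}$. That argument first uses Grafakos--Martell multilinear extrapolation to reduce to $p>2$; then, for fixed $p>2$ and $w\in A_p$, it sets $\sigma=w^{-2/(p-2)}$ and invokes Theorem~\ref{MainThm} to reduce matters to a weighted bound for $\text{PSF}_{\mathcal{S}}^{(q_1,q_2,q_3)}$. This is obtained by quoting a $(1,1,q_3)$ sparse-form estimate with an explicit constant $\gamma_w$, simplifying $\gamma_w$ via reverse H\"older, and then upgrading $(1,1,q_3)$ to $(q_1,q_2,q_3)$ using the Di~Plinio--Lerner inequality $(f)_{1+\epsilon,Q}\le (f)_{1,Q}+2^d\epsilon(M_{1+\epsilon}f)_{1,Q}$ together with Buckley's bound for $M_q$ on $L^p(w)$. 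The payoff is an explicit power of $[w]_{A_p}$ in the final constant, which your specialization route does not supply because Corollary~\ref{MultiWeight} as stated only records qualitative dependence on the $A_{p_i}$ characteristics. So: your approach is shorter and perfectly valid for the bare statement, while the paper's approach is longer but quantitative.
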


		\noindent We provide a separate proof of this corollary in Section 5.1 that indicates how to track the dependence of $C$ on $[w]_{A_p}$. The proof of Corollary \ref{WeightedEst} is again a consequence of sparse domination and extrapolation.
		
		We can also prove weighted estimates with respect to the more general \textit{multilinear} Muckenhoupt classes. In particular, suppose  $\sum_{i=1}^{3}\frac{1}{q_i} = 1$ with $1 < q_1,q_2,q_3 < \infty$ and let  $v_1,v_2,v_3$ be strictly positive functions such that $$\prod_{i=1}^{3} v_{i}^{\frac{1}{q_i}} = 1.$$ Define $$[\vec{v}]_{A_{\vec{q}}^{\vec{p}}} := \sup_{Q}\prod_{i=1}^{3} \left(\frac{1}{|Q|}\int_{Q}v_{i}^{ \frac{p_i}{p_i - q_i} }\right)^{\frac{1}{p_i} - \frac{1}{q_i}}$$ for any tuple $\vec{p} = (p_1, p_2, p_3)$ with $1 \leq p_i < q_i < \infty$ for $i = 1,2,3.$ Notice that we assume $v_i > 0$ for each $i$, but we do not assume that $v_i \in L_{loc}^{1}$. This multilinear class was originally introduced in \cite{LernerMulti} with $\vec{p} = (1,1,1)$, and used in \cite{MultiSparse} for more general $\vec{p}$. In the case when $\vec{p} = (1,1,1)$, it is the natural weight class associated to the maximal operator $$\mathcal{M}(\vec{f})(x) := \sup_{x \in Q} \prod_{i=1}^{2}\frac{1}{|Q|}\int_{Q} |f_i(y_i)| dy_i.$$ We will prove the following in Section 5.2.
		
		\begin{cor}\label{MultiWeight2} Suppose the tuples $\vec{p}, \vec{q}$ and the functions $v_1,v_2,v_3$ are defined as above, with $p_i > 1$ for $i = 1,2,3$. Also let $$\sigma = v_3^{-q_3' /q_3} = v_1^{-\frac{q_2}{q_1 + q_2}}v_2^{-\frac{q_1}{q_1 + q_2}},$$ where $q_3'$ is the conjugate of $q_3$. Then if $[\vec{v}]_{A_{\vec{q}}^{\vec{p}}} < \infty$ and $\Omega \in L^{\infty}(S^{2d-1})$ with $\int_{S^{2d-1}}\Omega = 0$, we have $$\|T_{\Omega}(f_1, f_2)\|_{L^{q_3 '}(\sigma)} \leq C_{\Omega,d, p_i, q_i} [\vec{v}]_{A_{\vec{q}}^{\vec{p}}}^{\max \left\{ \frac{q_i}{q_i-p_i} \right\} }\|f_1\|_{L^{q_1}(v_1)}\|f_2\|_{L^{q_2}(v_2)}$$ for all $f_i \in L^{q_i}(v_i), \ i = 1,2$. \end{cor}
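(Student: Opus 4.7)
My plan is to combine a duality reduction with the multilinear sparse domination of Section~2 and a quantitative weighted bound for positive sparse forms.

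First, dualize. Since $\sigma>0$ and $(L^{q_3'}(\sigma))^*=L^{q_3}(\sigma)$,
\[
\|T_\Omega(f_1,f_2)\|_{L^{q_3'}(\sigma)}=\sup_{\|h\|_{L^{q_3}(\sigma)}\le 1}\int T_\Omega(f_1,f_2)\,h\,\sigma\,dx=\sup_{\|h\|_{L^{q_3}(\sigma)}\le 1}\langle T_\Omega(f_1,f_2),h\sigma\rangle.
\]
Setting $f_3=h\sigma$ reduces matters to a trilinear estimate, to which I would apply the general multilinear sparse domination of Section~2 (the version admitting distinct exponents $(p_1,p_2,p_3)$ rather than the diagonal tuple of Theorem~\ref{MainThm}). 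The task becomes to bound
$\sum_{Q\in\mathcal{S}}|Q|(f_1)_{p_1,Q}(f_2)_{p_2,Q}(h\sigma)_{p_3,Q}$ by the right-hand side of the corollary.

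Next I would extract the weights. In each local $p_i$-average I would insert $v_i^{p_i/q_i}\cdot v_i^{-p_i/q_i}$ (and $\sigma^{p_3/q_3}\cdot\sigma^{-p_3/q_3}$ for the third slot) and Hölder with exponents $(q_i/p_i,(q_i/p_i)')$ on $Q$. Exploiting the identity $\sigma^{q_3}v_3=\sigma$ (which follows from $\sigma=v_3^{-q_3'/q_3}$), this yields
\[
(f_i)_{p_i,Q}\le \langle|f_i|^{q_i}v_i\rangle_Q^{1/q_i}\,\langle v_i^{p_i/(p_i-q_i)}\rangle_Q^{1/p_i-1/q_i}\quad (i=1,2),
\]
\[
(h\sigma)_{p_3,Q}\le \langle|h|^{q_3}\sigma\rangle_Q^{1/q_3}\,\langle v_3^{p_3/(p_3-q_3)}\rangle_Q^{1/p_3-1/q_3}.
\]
Multiplying these three inequalities, the product of the three ``weight-average'' factors is exactly the quantity defining $[\vec v]_{A_{\vec q}^{\vec p}}$ in each cube, and so is bounded uniformly in $Q$ by $[\vec v]_{A_{\vec q}^{\vec p}}$.

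What remains is the sparse form
$\sum_{Q\in\mathcal{S}}|Q|\langle|f_1|^{q_1}v_1\rangle_Q^{1/q_1}\langle|f_2|^{q_2}v_2\rangle_Q^{1/q_2}\langle|h|^{q_3}\sigma\rangle_Q^{1/q_3}$.
I would bound this by combining the sparsity inequality $|Q|\le\eta^{-1}|E_Q|$ with quantitative Hytönen--Pérez/Buckley-type estimates for weighted Hardy--Littlewood maximal functions and multilinear Hölder (using $\sum 1/q_i=1$), following the scheme of \cite{MultiSparse}. The main technical obstacle is to package the three quantitative maximal inequalities so that the output constant is $[\vec v]_{A_{\vec q}^{\vec p}}^{\max_i q_i/(q_i-p_i)}$ rather than a cruder sum; the ``max'' over $i$ arises because the worst slot (the one for which $q_i-p_i$ is smallest, and hence the associated Buckley exponent is largest) dominates the quantitative loss in the product.
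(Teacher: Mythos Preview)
Your overall strategy matches the paper's exactly: the paper proves Lemma~\ref{MultiWeightLem} by combining Theorem~\ref{MainThm} with the weighted bound for positive sparse forms from \cite{MultiSparse} (Lemma~6.1 there), and then obtains Corollary~\ref{MultiWeight2} by duality. One minor point: for $T_\Omega$ only the \emph{diagonal} sparse tuple $(p,p,p)$ is actually established (see the end of Section~4), not an arbitrary $(p_1,p_2,p_3)$; the abstract Theorem~\ref{abstractThm} allows distinct exponents, but its hypotheses are verified for $T_\Omega$ only with $(r,r,r)$. This is harmless---take $p=\min_i p_i>1$ and use monotonicity of $(f)_{r,Q}$ in $r$---but you should say so rather than invoke a non-diagonal version that has not been proved for $T_\Omega$.

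There is, however, a genuine gap in your sketch of the weighted PSF bound. Your H\"older step (steps 3--4) is correct, but the decomposition it yields---one factor of $[\vec v]_{A_{\vec q}^{\vec p}}$ times the ``remaining'' form
\[
\sum_{Q\in\mathcal S}|Q|\,\langle |f_1|^{q_1}v_1\rangle_Q^{1/q_1}\langle |f_2|^{q_2}v_2\rangle_Q^{1/q_2}\langle |h|^{q_3}\sigma\rangle_Q^{1/q_3}
\]
---is not usable as written. That remaining form sits exactly at the endpoint $\sum_i 1/q_i=1$, and the route you propose (sparsity $|Q|\le\eta^{-1}|E_Q|$, pointwise maximal control, multilinear H\"older) leads to $\prod_i\|M(|f_i|^{q_i}v_i)\|_{L^1}^{1/q_i}$, which is infinite. ``Buckley-type'' bounds for the linear weighted maximal function are also unavailable here, since in the multilinear class $A_{\vec q}^{\vec p}$ the individual $v_i$ need not lie in any classical $A_r$ (they need not even be locally integrable). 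The argument in \cite{MultiSparse} does \emph{not} first extract $[\vec v]$; instead one keeps the dual weights $u_i=v_i^{p_i/(p_i-q_i)}$ in play, uses the $A_{\vec q}^{\vec p}$ condition to compare $|Q|\prod_i\langle u_i\rangle_Q^{1/p_i}$ with $u_j(E_Q)$ for the index $j$ realizing the maximum, and then runs a Carleson/H\"older argument relative to the measure $u_j\,dx$. This is precisely what produces the exponent $\max_i q_i/(q_i-p_i)$ in a single stroke. If you replace your step~5 by that argument (or, as the paper does, simply cite \cite{MultiSparse}), the proof goes through.
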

		
		\noindent We will see in Section 5.2 below that Corollary \ref{MultiWeight} and Corollary \ref{MultiWeight2} can both be deduced from the same lemma, which is a consequence of the sparse domination. However, the class $[\vec{v}]_{A_{\vec{q}}^{\vec{p}}}$ is in general strictly larger than $A_{q_1} \times A_{q_2}$ since, for example, $v_1$ and $v_2$ do not have to be in $L_{loc}^{1}$ (see \cite{LernerMulti} for some examples when $\vec{p} = (1,1,1)$). 
		
		\vspace{5mm}
		
		\noindent\textbf{Sample Application.} We note that the Calder\'{o}n commutator $$ \mathcal{C}(a,f)(x) = \text{p.v.} \int_{\R} \frac{A(x) - A(y)}{(x-y)^{2}} f(y)dy,$$ where $a$ is the derivative of $A$, is an example of the rough bilinear operators considered in this paper \cite{CoifMey}. Let $e(t) = 1$ if $t >0$ and $e(t) = 0$ if $t<0$. Then we can write this operator as $$\text{p.v.} \int_{\R}\int_{\R} K(x-y, x-z) f(y)a(z) dydz$$ with $K(y,z) = \frac{e(z) - e(z-y)}{y^{2}} =: \frac{\Omega((y,z)/|(y,z)|)}{|(y,z)|^{2}}$. Moreover, $\Omega$ is odd and bounded, so Theorem \ref{MainThm} and the weighted corollaries all apply to $\mathcal{C}(a,f)$.

		\subsection{Structure of Paper} The proof of Theorem \ref{MainThm} is broken up into two parts. We begin by formulating an abstract sparse domination theorem in the multilinear setting (Theorem \ref{abstractThm} below), a result that generalizes Theorem C in \cite{roughSparse} by Conde-Alonso, Culiuc, Di Plinio, and Ou. The proof of this theorem is similar to their result, so it is deferred to Section 6. In the second part of the paper we use the deep results of Grafakos, He, and Honz\'{\i}k to show that the assumptions of Theorem \ref{abstractThm} are satisfied by the rough bilinear operators $T_{\Omega}$. Along the way we also prove a sparse domination result for multilinear Calder\'{o}n-Zygmund operators with a standard smoothness assumption, as defined by Grafakos and Torres in \cite{GrTo}. This is the content of Theorem \ref{CZsparse} below. This sparse domination result for multilinear Calder\'{o}n-Zygmund operators uses different methods than the recent paper by K. Li \cite{KLi}, and in particular avoids reliance on maximal truncations. As a corollary we can partially recover the weighted estimates from \cite{DyCalc}. Finally, in Section 5 we prove the weighted estimates outlined in the corollaries above. 
		
		Theorem \ref{abstractThm} is independently interesting, and due to its generality we expect it to be useful for analyzing other multilinear operators in the future. We also note that Theorem \ref{MainThm} only applies when $\Omega \in L^{\infty}(S^{2d-1})$, since we need some analogue of the classical Calder\'{o}n-Zygmund size condition. Sparse domination when $\Omega \in L^{q}(S^{2d-1})$ for other values of $q$, or when $\Omega$ is in some Orlicz-Lorentz space, is still an open problem. In fact, the full range of boundedness of $T_{\Omega}$ when $\Omega\in L^{q}(S^{2d-1})$ for $2 < q <\infty$ is not known. It is also unknown whether $T_{\Omega}$ is bounded anywhere for $q <2$. See \cite{BiRough} for more details. 
		
		\subsection{Acknowledgments} The author would like to thank Francesco Di Plinio for suggesting the problem that led to this paper, and for reading early drafts and making many helpful suggestions along the way. Section 5 in particular benefited from these suggestions. The author would also like to thank Jill Pipher for many helpful conversations.

		\subsection{Notation and Definitions} Given a dyadic cube $L$, we let $s_L = \log_2(\text{length}(L))$ and let $\hat{L}$ be the $2^{5}$-fold dilate of $L$. Throughout the paper we fix a dyadic lattice $\mathcal{D}$ in $\R^{d}$. A collection of disjoint cubes $\mathcal{P} \subset \mathcal{D}$ will be called a \textit{stopping collection} with top $Q$ if its elements are contained in $3Q$ and satisfy the following \textit{separation properties}: 
		\begin{enumerate}[(i)]
			\item If $L,R\in \mathcal{P}$ and $|s_L - s_R| \geq 8$ then $7L\cap 7R = \emptyset$
			\item $\bigcup_{\substack{L \in \mathcal{P} \\ 3L \cap 2Q \neq \emptyset}} 9L \subset \bigcup_{L\in\mathcal{P}} L$.
		\end{enumerate} This definition is taken from \cite{roughSparse} (the particular constants here are chosen for technical reasons related to the proof of Theorem \ref{abstractThm} below). 
		
		Throughout the paper we use $c_\alpha$ to represent a positive constant depending on the parameter $\alpha$ that may change line to line. We often write $A \lesssim B$ to mean $A \leq cB$, where $c$ is a positive constant depending on the dimension $d$, the multilinearity term $m$, or relevant exponents. For $E \subset \R^{d}$ we let $|E|$ denote its Lebesgue measure and $\textbf{1}_E$ its indicator function. Finally, we will use $M_p(f)(x)$ to denote the $p$-th Hardy-Littlewood maximal function $$M_{p}(f)(x) = \sup_{x\in Q} \left(\frac{1}{|Q|}\int_{Q} |f(y)|^{p} dy \right)^{1/p}$$ (here the supremum is taken over cubes $Q\subset \R^{d}$ containing $x$). Recall that $M_p$ is bounded on $L^{r}(\R^{d})$ when $r > p$. We also write $M^{w}_p(f)$ to denote the $p$-th maximal function associated to a weight $w$. This operator satisfies the same boundedness properties as the standard maximal operator when $w$ is doubling (and in particular when $w \in A_q$ for some $q$).

		\section{Abstract Sparse Theorem in Multilinear Setting}
		
		In this section we formulate an abstract sparse domination result which we will apply in Section 4 to prove Theorem 1. Let $T$ be a bounded $m$-linear operator mapping $L^{r_1}\times ... \times L^{r_m} \rightarrow L^{\alpha}$ for some $r_{i}, \alpha \geq 1$ with $\frac{1}{r_1} + ... + \frac{1}{r_m} = \frac{1}{\alpha},$ and assume $T$ is given by integration against a kernel $K(x_1,...,x_{m+1})$ away from the diagonal. We will assume the kernel of $T$ has a decomposition \begin{equation}\label{ss1} K(x_1,...,x_m, x_{m+1}) = \sum_{s \in \Z} K_{s}(x_1,...,x_m, x_{m+1}),\end{equation} such that in the support of $K_{s}$ we have $|x_k - x_l| \leq 2^{s}$ for all $l,k$. We also define \begin{align*}[K]_{p} := \sup_{s \in \Z} 2^{\frac{mds}{p'}} \sup_{y \in \R^{d}}&\big(\|K_{s}(y, \cdot + y, ... \ , \cdot + y)\|_{L^{p}(\R^{md})} + \\ & \  \|K_{s}(\cdot + y, y, \cdot + y, ... \ , \cdot + y)\|_{L^{p}(\R^{md})} + ...\big)\end{align*}  (the last `...' indicates the other possible symmetric terms), and require that \begin{equation}\label{ss2} [K]_{p} < \infty.\end{equation} This is an abstract analogue of the basic size estimate for multilinear Calder\'{o}n-Zygmund kernels, see \cite{GrTo} and Section 3 below.  We will refer to conditions \eqref{ss1} and \eqref{ss2} as the \textbf{(S)} (single-scale) properties.
		
		Since the operators under consideration are $m$-linear, we will have to deal with $(m+1)$-linear forms of type $$\int_{\R^{n}}T(f_1,...,f_{m})(x) f_{m+1} (x_{m+1}) \ dx_{m+1}.$$ We define $\Lambda^{\nu}_{\mu}(f_1,...,f_{m+1})$ to be the form $$ \int_{\R^{(m+1)d}} \sum_{\mu < s < \nu}  K_{s}(x_1,...,x_{m+1})f_1(x_1)...f_{m+1}(x_{m+1}) \ dx_1...dx_{m+1}, $$ and always assume $\mu > 0$ and $\nu < \infty$. Finally, we will assume the following uniform estimate on the truncations: given $\frac{1}{r_1} + ... + \frac{1}{r_m} = \frac{1}{\alpha}$ as above, $$C_{T}(r_1,...,r_m, \alpha) := \sup \| \Lambda_{\mu}^{\nu}\|_{L^{r_1}\times ... \times L^{r_m} \times L^{\alpha'} \rightarrow \C } < \infty.$$ Here the supremum is taken over all finite truncations. From now on the truncation bounds $\mu, \nu$ will be omitted from the notation unless explicitly needed, and we will write $C_{T}$ in place of $C_{T}(r_1,...,r_{m}, \alpha).$
		
		\subsection{Remark on Truncations} We assume below that our operator is truncated to finitely many scales, and prove estimates that are uniform in the number of truncations. The justification for this assumption is sketched below. The argument is a straightforward generalization of a result that can be found in \cite{Stein} Ch. 1, section 7.2. 
		
		Let $T^{\epsilon}$ denote our operator truncated at scales larger than $\epsilon$ and smaller than $1/\epsilon$. Fix $f_i \in L^{r_i}$ with compact support, $1 \leq i \leq m$. By the uniform bound assumption on $C_{T}$, after passing to a subsequence we can assume $T^{\epsilon}(f_1,...,f_m)$ converges weakly in $L^{\alpha}$ to some $T_{0}(f_1,...,f_{m})$. It is clear from the weak convergence that $T_0$ must be multilinear. We claim that if $Q_i$ is a cube in $\R^{n}$ then \begin{align}\label{truncLim}(T-T_0)&(f_1\textbf{1}_{Q_1}, ..., f_m \textbf{1}_{Q_m})(x_{m+1}) \\ \nonumber &= \textbf{1}_{Q_1}(x_{m+1}) ... \textbf{1}_{Q_m}(x_{m+1})(T- T_0)(f_1,...,f_m)(x_{m+1}) \ \ \text{a.e.}\end{align} In fact, the support restriction on the kernel of $T - T^{\epsilon}$ shows that the integral vanishes if $x_{m+1} \notin Q_{i}$ for any $i$, provided $\epsilon$ is small enough. Then the identity follows from the weak convergence of $T^{\epsilon}$. Using multilinearity we can extend \eqref{truncLim} to simple functions, and then using the boundedness of $T$ we see that \eqref{truncLim} holds with $g_{i} \in L^{r_i}$ in place of $\textbf{1}_{Q_{i}}$. In particular, let $E^{j}$ be an increasing sequence of open sets that exhaust $\R^{d}$, and suppose $f_i \in L^{r_i}$ with support in $E^{j_i}$. Then we must have \begin{align*}(T- T_0)&(f_1,...,f_m)(x_{m+1}) \\ &= f_1 (x_{m+1})...f_m(x_{m+1})(T-T_0)(\textbf{1}_{E^{j_1}},...,\textbf{1}_{E^{j_m}})(x_{m+1}) \ \ a.e.\end{align*} It follows that $T$ differs from the limit $T_0$ by a multiplication operator, provided $$(T-T_0)(\textbf{1}_{E^{j_1}},...,\textbf{1}_{E^{j_m}})(x_{m+1}) =: \phi(x_{m+1})$$ forms a coherent function with respect to the $E^{j_i}$. This is clear as in the linear case. Moreover, $\phi \in L^{\infty}$ since there exists $c_T > 0$ such that \begin{align*} c_T &> \sup_{\substack{f_i\in L^{r_i} \\ \text{cpt. supp } }}\frac{\|(T-T_0)(f_1,...,f_m)\|_{L^{\alpha}}}{\| f_1\|_{L^{r_1}}...\|f_m\|_{L^{r_m}}}  \\ &= \sup_{\substack{f_i\in L^{r_i} \\ \text{cpt. supp } }}\frac{\|\phi\cdot f_1...f_m\|_{L^{\alpha}}}{\|f_1\|_{L^{r_1}}...\|f_m\|_{L^{r_m}}} \geq \|\phi\|_{L^{\infty}}. \end{align*} 
		Therefore, as long as we can prove admissible bounds for multiplication operators of the form $A_{\phi}(f_1,...,f_m) = \phi \cdot f_1 ... f_m$, with $\phi \in L^{\infty}$, we are justified in working with a finite (but otherwise arbitrary) number of scales. 
		
		\subsection{The Abstract Theorem} Assume we are given some stopping collection of cubes $\mathcal{P}$ with top $Q$. We will use the space $\mathcal{Y}_p = \mathcal{Y}_{p}(\mathcal{P})$ from \cite{roughSparse}, with norm $\|\cdot\|_{\mathcal{Y}_p}$ defined by 
		
		\begin{equation}
		\|h\|_{\mathcal{Y}_p} :=\begin{cases} \max \left(\left\|h\textbf{1}_{\R^{d} \backslash \bigcup_{L\in \mathcal{P}}L} \right\|_{\infty}, \  \underset{L \in \mathcal{P}}{\sup} \underset{\ x \in \hat{L}}{\inf} M_{p}h(x) \right) & \text{if $p<\infty$}\\
		\|h\|_{\infty} & \text{if $p = \infty$}
		\end{cases}
		\end{equation}
		
		\noindent (recall from above that $\hat{L}$ is the $2^{5}$-fold dilate of $L$). We let $\|b\|_{\mathcal{X}_{p}}$ denote the $\mathcal{Y}_{p}$-norm of $b$ when $b = \sum_{L\in \mathcal{P}}b_L$ with $b_L$ supported on $L \in \mathcal{P}$, and use $\dot{\mathcal{X}}_p$ to signal that $\int b_L = 0$ for each $L$. Observe that these norms are increasing in $p$, a property we use many times below. Also recall that if $h \in \mathcal{Y}_p$ there is a natural Calder\'{o}n-Zygmund decomposition of $h$ associated to the stopping collection $\mathcal{P}$: we can split $h = g + b$, where \begin{equation} \label{CZdecomp}b = \sum_{L \in \mathcal{P}} \left(h - \frac{1}{|L|}\int_{L} h\right)\textbf{1}_{L}\end{equation} and $$\|g\|_{\mathcal{Y}_{\infty}} \leq 2^{5d}\|h\|_{\mathcal{Y}_p}, \ \ \ \ b\in \dot{\mathcal{X}_p}, \ \|b\|_{\dot{\mathcal{X}_p}} \leq 2^{5d + 1}\|h\|_{\mathcal{Y}_p}.$$ Given a cube $L$, define \begin{align*}\Lambda_{L}(f_1,...,f_m, f_{m+1}) &:= \Lambda^{\min(s_{L}, \text{top trunc})}(f_1 \textbf{1}_{L},f_2,..., f_{m},f_{m+1}) \\ &= \Lambda^{\min(s_{L}, \text{top trunc})}(f_1 \textbf{1}_{L},f_2 \textbf{1}_{3L},...,f_{m}\textbf{1}_{3L}, f_{m+1}\textbf{1}_{3L}), \end{align*} meaning the truncation from above never exceeds the scale of $L$. The second equality follows from the support condition on the kernel imposed by \eqref{ss1}. For simplicity we often write $\Lambda^{s_L}$ to indicate that all truncations are at or below level $s_L$. We will work with \begin{equation}\label{formDef}\Lambda_{\mathcal{P}}(f_1,..., f_{m}, f_{m+1}) := \Lambda_{Q}(f_1,...,f_{m+1}) - \sum_{\substack{L\in \mathcal{P} \\ L \subset Q } } \Lambda_{L}(f_1,..., f_{m+1})\end{equation} (as above, $Q$ is the top cube of the stopping collection $\mathcal{P}$). Note that $\Lambda_{\mathcal{P}}$ is not symmetric in all of its arguments, due to some lack of symmetry in definitions. However, we will see below that $\Lambda_{\mathcal{P}}$ is `almost' symmetric, meaning it is symmetric up to a controllable error term. 
		
		We can now state the abstract sparse domination theorem. 
		
		\begin{thm}\label{abstractThm} Let $T$ be an $m$-linear operator with kernel $K$ as above, such that $K$ can be decomposed as in \eqref{ss1} and $C_T < \infty$. Also let $\Lambda$ be the $(m+1)$-linear form associated to $T$. Assume there exist $1 \leq p_1,...,p_m, p_{m+1} \leq \infty$ and some positive constant $C_L$ such that the following estimates hold uniformly over all finite truncations, all dyadic lattices $\mathcal{D}$, and all stopping collections $\mathcal{P}$: \begin{small}\begin{align}\nonumber |\Lambda_{\mathcal{P}}(b, g_2, g_3,..., g_{m+1})| &\leq C_L|Q|\|b\|_{\dot{\mathcal{X}}_{p_1}}\|g_2\|_{\mathcal{Y}_{p_2}}\|g_3\|_{\mathcal{Y}_{p_3}}...\|g_{m+1}\|_{\mathcal{Y}_{p_{m+1}}} \\  \label{assumptionL} |\Lambda_{\mathcal{P}}(g_1, b, g_3, ..., g_{m+1})| &\leq C_L |Q|\|g_1\|_{\mathcal{Y}_{\infty}}\|b\|_{\dot{\mathcal{X}}_{p_2}}\|g_3\|_{\mathcal{Y}_{p_3}} ... \|g_{m+1}\|_{\mathcal{Y}_{p_{m+1}}} \\ \nonumber  |\Lambda_{\mathcal{P}}(g_1, g_2, b, g_4, ..., g_{m+1})| &\leq C_L |Q|\|g_1\|_{\mathcal{Y}_{\infty}}\|g_2\|_{\mathcal{Y}_{\infty}}\|b\|_{\dot{\mathcal{X}}_{p_3}}\|g_4\|_{\mathcal{Y}_{p_4}}...\|g_{m+1}\|_{\mathcal{Y}_{p_{m+1}}} \\ \nonumber &\vdots \\ \nonumber |\Lambda_{\mathcal{P}}(g_1, g_2, ..., g_{m}, b)| &\leq C_L |Q|\|g_1\|_{\mathcal{Y}_{\infty}}\|g_2\|_{\mathcal{Y}_{\infty}}...\|g_{m}\|_{\mathcal{Y}_{\infty}}\|b\|_{\dot{\mathcal{X}}_{p_{m+1}}}. \end{align} \end{small} Also let $\vec{p} = (p_1,...,p_{m+1})$. Then there is some constant $c_{d}$ depending on the dimension $d$ such that $$\sup_{\mu, \nu} |\Lambda_{\mu}^{\nu}(f_1,...,f_{m}, f_{m+1})| \leq c_{d}\left[C_{T} + C_{L}\right]\sup_{\mathcal{S}} \text{PSF}_{\mathcal{S}; \vec{p}}(f_1,...,f_{m}, f_{m+1})$$ for all $f_{j} \in L^{p_{j}}(\R^{d})$ with compact support, where the supremum is taken with respect to all sparse collections $\mathcal{S}$ with some fixed sparsity constant that depends only on $d,m$. \end{thm}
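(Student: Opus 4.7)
The plan is to follow the two-stage template of Theorem C in \cite{roughSparse}, adapted to the $(m+1)$-linear setting. The two stages are: (A) prove a single-scale sparse bound
\[
|\Lambda_{\mathcal{P}}(f_1,\dots,f_{m+1})| \;\leq\; c_d(C_T+C_L)\,|Q|\,\prod_{i=1}^{m+1} (f_i)_{p_i,3Q},
\]
uniformly over finite truncations and all stopping collections $\mathcal{P}$ with top $Q$; then (B) iterate this bound along a stopping-time recursion to build the sparse collection $\mathcal{S}$. For (B), fix compactly supported $f_1,\dots,f_{m+1}$, embed their supports in a dyadic cube $Q_0$, and at each step build $\mathcal{P}_Q$ from the maximal dyadic $L\subset 3Q$ on which $(f_i)_{p_i,L} > A_d (f_i)_{p_i,3Q}$ for some $i$. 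Choosing $A_d$ large and using the weak-type bound for $M_{p_i}$ gives $\sum_{L \in \mathcal{P}_Q}|L| \leq \tfrac{1}{2}|Q|$, which supplies both the sparsity of $\mathcal{S}$ and, after passing to shadows, the separation properties (i)--(ii). Telescoping $\Lambda_Q = \Lambda_{\mathcal{P}_Q} + \sum_L \Lambda_L$ through all generations of this recursion reduces the theorem to (A).

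The single-scale bound (A) is proved by peeling off the functions one coordinate at a time, in exactly the order dictated by \eqref{assumptionL}. Split $f_1 = g_1 + b_1$ via \eqref{CZdecomp}, so that
\[
\Lambda_{\mathcal{P}}(f_1,\dots,f_{m+1}) = \Lambda_{\mathcal{P}}(b_1,f_2,\dots,f_{m+1}) + \Lambda_{\mathcal{P}}(g_1,f_2,\dots,f_{m+1}).
\]
The $b_1$-term is controlled by the first line of \eqref{assumptionL}, once one verifies $\|b_1\|_{\dot{\mathcal{X}}_{p_1}} \lesssim (f_1)_{p_1,3Q}$ and $\|f_i\|_{\mathcal{Y}_{p_i}} \lesssim (f_i)_{p_i,3Q}$ for $i \geq 2$; both follow from the stopping-time construction, Lebesgue differentiation (to control the $L^\infty$ part of the $\mathcal{Y}_p$-norm on $\R^d \setminus \bigcup L$), and the definition of $\hat{L}$. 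On the $g_1$ summand, decompose $f_2 = g_2 + b_2$ and apply the second line of \eqref{assumptionL}, now with $g_1 \in \mathcal{Y}_\infty$ since the stopping time gives $\|g_1\|_{\mathcal{Y}_\infty} \lesssim (f_1)_{p_1,3Q}$. Iterating this peeling $m+1$ times produces the full $C_L$ contribution and leaves the fully-good residual form $\Lambda_{\mathcal{P}}(g_1,\dots,g_{m+1})$.

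This residual is the main obstacle, and must be bounded using $C_T$ rather than $C_L$. I would expand $\Lambda_{\mathcal{P}} = \Lambda_Q - \sum_L \Lambda_L$ and estimate each single-scale piece by H\"{o}lder's inequality combined with the uniform truncation bound $C_T$: each $g_i \in L^\infty$ with $\|g_i\|_\infty \lesssim (f_i)_{p_i,3Q}$, so that $|\Lambda_R(g_1,\dots,g_{m+1})| \lesssim C_T\,|R|\prod_i (f_i)_{p_i,3Q}$ on any single cube $R$, and the disjointness $\sum_L |L| \leq |Q|$ absorbs the sum over $L$. The delicate technical point is that the truncation levels in $\Lambda_Q$ and in each $\Lambda_L$ differ, so to use the bound efficiently one has to match kernel scales to the geometry via the separation properties (i)--(ii) and the $2^5$-fold dilates $\hat{L}$ built into $\mathcal{Y}_p$. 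This is precisely where the specific numerical constants in the stopping-collection definition pay off, mirroring the corresponding estimate in \cite{roughSparse}.

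Summing the single-scale bound over $Q \in \mathcal{S}$ and, if needed, replacing $3Q$ by a slightly enlarged dyadic cube at the cost of adjusting the sparsity constant by a dimensional factor, delivers the claimed domination by $\sup_{\mathcal{S}} \text{PSF}_{\mathcal{S};\vec{p}}$. Uniformity in $\mu,\nu$ is preserved at every step, so the supremum over truncations passes through and the proof is complete.
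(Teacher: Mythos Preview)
Your proposal is correct and matches the paper's proof in Section~6 almost exactly: the paper proves your step (A) as Lemma~\ref{sparseInit} (performing the Calder\'on--Zygmund splitting on all $m+1$ slots at once and then selecting the \emph{first} bad entry, which is equivalent to your sequential peeling), and then runs your iteration (B) via the exceptional sets~\eqref{excSet}. The residual all-good term $\Lambda_{\mathcal{P}}(g_1,\dots,g_{m+1})$ is actually not delicate: because $C_T$ is uniform over \emph{all} truncation levels, one simply bounds $|\Lambda_Q|$ and each $|\Lambda_L|$ separately by $C_T|R|\prod_i\|g_i\|_{\mathcal{Y}_\infty}$ (H\"older plus $\tfrac{1}{r_1}+\dots+\tfrac{1}{r_m}+\tfrac{1}{\alpha'}=1$) and sums using disjointness of the $L\in\mathcal{P}$, so no scale-matching or use of the separation properties is needed at that step.
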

		
		\subsection{Some Remarks on Theorem \ref{abstractThm}} We will prove in Section 6.1 that the multiplication operators $A_{\phi}$ considered above in Remark 2.1 satisfy admissible PSF bounds. Therefore Theorem \ref{abstractThm} implies $$|\Lambda(f_1,...,f_{m}, f_{m+1})| \leq c_{d}\left[C_{T} + C_{L}\right]\sup_{\mathcal{S}} \text{PSF}_{\mathcal{S}}^{\vec{p}}(f_1,...,f_{m}, f_{m+1})$$ when $f_{j} \in L^{\infty}(\R^{d})$ with compact support. If $\Lambda$ extends boundedly to $L^{q_1}(\R^{d})\times ... \times L^{q_{m+1}}(\R^{d})$, we can use standard density arguments to lift the PSF bound to the case where $f_i \in L^{q_i}(\R^{d})$. 
		
		We also provide some more motivation for the estimates \eqref{assumptionL}. Let $\mathcal{P}$ be a stopping collection of dyadic cubes with top $Q$ and suppose $b = \sum_{L \in \mathcal{P}} b_{L}$ with $b_L$ supported in $L$. Also assume $g_1, g_2, ..., g_{m+1}$ are functions supported in $3Q$. If we fix $L \in \mathcal{P}$ with scale $s_L$, then by definition $\Lambda_{\mathcal{P}}(b_L,g_2, ..., g_{m+1})$ splits as $$\Lambda_{\mathcal{P}}(b_L,g_2, ..., g_{m+1}) = \Lambda^{s_Q}(b_L,g_2, ..., g_{m+1}) - \Lambda^{s_L}(b_L,g_2\textbf{1}_{3L}, ..., g_{m+1}\textbf{1}_{3L}).$$ Now let $s$ be a fixed scale with $s \leq s_L$. Then the piece of $\Lambda_{\mathcal{P}}(b_L,g_{2},..., g_{m+1})$ corresponding to this scale can be written as \begin{align*}
		&\int_{\R^{(m+1)d}}K_{s}b_L (x_1) \left(g_2(x_2) ... g_{m+1}(x_{m+1}) - g_2\textbf{1}_{3L}(x_2) ... g_{m+1}\textbf{1}_{3L}(x_{m+1})\right) d\vec{x} \\ &= \int_{\R^{(m+1)d}}K_{s}b_L  \left(g_2\textbf{1}_{3L} ... g_{m+1}\textbf{1}_{3L} - g_2\textbf{1}_{3L} ... g_{m+1}\textbf{1}_{3L}\right) d\vec{x} \\ &= 0. \end{align*} Here we've used the truncation of the kernel: since $x_1 \in L$ and $|x_1 - x_i| \leq 2^{s} \leq 2^{s_L}$ for any $i$, we must have $x_i \in 3L$ for each $i$. Therefore all scales $s$ entering into $\Lambda_{\mathcal{P}}(b_L,g_{2},..., g_{m+1})$ satisfy $s > s_L$, and as a consequence we have the decomposition $$ \Lambda_{\mathcal{P}}(b_L,g_{2},..., g_{m+1}) = \int_{\R^{(m+1)d}}\sum_{l \geq 1}K_{s_L + l}b_L (x_1)g_2 (x_2)... g_{m+1}(x_{m+1}) d\vec{x}.$$ Summing over $L \in \mathcal{P}$ then gives \begin{align} \label{ssRep1} \Lambda_{\mathcal{P}}(b,g_{2},..., g_{m+1}) = \sum_{L \in \mathcal{P}} \int_{\R^{(m+1)d}}\sum_{l \geq 1}K_{s_L + l}b_L (x_1)g_2 (x_2)... g_{m+1}(x_{m+1}) d\vec{x} \\ \nonumber = \int_{\R^{(m+1)d}}\sum_{s}\sum_{l \geq 1}K_{s}b_{s - l} (x_1)g_2 (x_2)... g_{m+1}(x_{m+1}) d\vec{x},\end{align} where $$b_{s-l} = \sum_{\substack{ L \in \mathcal{P} \\ s_L = s - l }}b_L.$$ The representation \eqref{ssRep1} often enables one to verify the estimates \eqref{assumptionL} in practice. We will see this in the case of multilinear Calder\'{o}n-Zygmund operators below in Section 3. In the Appendix we prove the following result about the adjoint forms:
		
		\begin{prop}\label{adjointDecomp} Let $b, g_1,...,g_{m+1}$ be defined as above, fix $1 < p \leq \infty$, and suppose $[K]_{p'} < \infty$. If we set $$b^{in} = \sum_{\substack{ L \in \mathcal{P} \\ 3L \cap 2Q \neq \emptyset }} b_L,$$ then \begin{align*}\Lambda_{\mathcal{P}}&(g_1, b, g_3, ..., g_{m+1}) \\ &= \int_{\R^{(m+1)d}}\sum_{s}\sum_{l \geq 1}K_{s}b^{in}_{s - l} (x_2)g_1 (x_1)g_3(x_3)... g_{m+1}(x_{m+1}) d\vec{x} \ + \ \phi,\end{align*} where $\phi$ is an error term that satisfies the estimate $$|\phi| \lesssim [K]_{p'}|Q| \|b\|_{\mathcal{X}_{1}} \|g_1\|_{\mathcal{Y}_p}\|g_3\|_{\mathcal{Y}_p}...\|g_{m+1}\|_{\mathcal{Y}_p}.$$ An analogous decomposition holds for each of the other forms $$\Lambda_{\mathcal{P}}(g_1,g_2, b, g_4, ..., g_{m+1}), ... , \Lambda_{\mathcal{P}}(g_1,...,g_{m-1}, b, g_{m+1}), \Lambda_{\mathcal{P}}(g_1,...,g_m , b),$$ with error terms satisfying the same bounds (with obvious changes in indicies). \end{prop}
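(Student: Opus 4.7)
The plan is to reorganize $\Lambda_{\mathcal{P}}(g_1, b, g_3, \ldots, g_{m+1})$ scale by scale, paralleling the derivation of the first-slot identity \eqref{ssRep1}, and to collect into $\phi$ the discrepancies caused by the asymmetry of the definition \eqref{formDef}, in which only the \emph{first} argument carries the stopping structure. Since $K_s$ is symmetric in its spatial entries, if the definition of $\Lambda_\mathcal{P}$ were symmetric in $x_1$ and $x_2$, then the same computation that produced \eqref{ssRep1} would give the claimed representation with $b$ in place of $b^{in}$; hence the proof reduces to showing that the symmetrization error together with the contribution of $b^{out} := b - b^{in}$ can be controlled using only the size estimate $[K]_{p'}$.

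Concretely, I would begin by expanding $\Lambda_\mathcal{P} = \Lambda_Q - \sum_{L\subset Q} \Lambda_L$ one scale at a time to get
\[
\Lambda_{\mathcal{P}}(g_1, b, g_3, \ldots, g_{m+1}) = \sum_s \int K_s \, g_1(x_1)\,\mathbf{1}_{E_s}(x_1)\, b(x_2)\, g_3(x_3)\cdots g_{m+1}(x_{m+1})\, d\vec{x},
\]
where $E_s = Q \setminus \bigcup_{L \subset Q,\, s_L \geq s} L$ records the surviving region after the subtractions. Comparing this to the target
\[
\sum_s \int K_s \, g_1(x_1) \sum_{l \geq 1} b^{in}_{s-l}(x_2)\, g_3(x_3) \cdots g_{m+1}(x_{m+1}) \, d\vec{x},
\]
the difference $\phi$ splits into three types of contributions: (a) the outer part $b^{out}$; (b) inner cubes $L$ with $s_L < s$ paired against the ``missing'' region $\mathbf{1}_Q - \mathbf{1}_{E_s} = \sum_{L' \subset Q,\, s_{L'} \geq s} \mathbf{1}_{L'}$; and (c) inner cubes $L$ with $s_L \geq s$ paired against $\mathbf{1}_{E_s}$. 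In (a), the separation $3L \cap 2Q = \emptyset$ together with the kernel support $|x_1 - x_2| \leq 2^s$ forces a bounded range of scales to contribute; in (b) and (c), separation properties (i) and (ii) of the stopping collection likewise confine the admissible scales $s$ to a bounded dyadic window around $s_L$.

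Each of the error pieces is then bounded by H\"older's inequality at each fixed scale, using the kernel size $\|K_s(y, \cdot + y, \ldots)\|_{L^{p'}} \lesssim [K]_{p'}\, 2^{-mds/p}$ with $b_L$ measured in $L^1$ and each $g_i$ in $L^p$ on a dyadic dilate of $L$; the latter is dominated by $\|g_i\|_{\mathcal{Y}_p}$ via the infimum of $M_p g_i$ over $\hat L$. Summing $\|b_L\|_{L^1}$ over $L \in \mathcal{P}$ and using the disjointness of stopping cubes together with $L \subset 3Q$ to bound $\sum_L |L| \lesssim |Q|$ yields $|\phi| \lesssim [K]_{p'} |Q| \|b\|_{\mathcal{X}_1} \|g_1\|_{\mathcal{Y}_p} \prod_{i \geq 3}\|g_i\|_{\mathcal{Y}_p}$. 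The main obstacle is verifying that in each of (a)--(c) only $O(1)$ scales $s$ contribute nontrivially per cube $L$, so that the $s$-sum produces no logarithmic loss; this is precisely where the numerical constants in the stopping conditions and the $2^5$-fold dilate $\hat{L}$ become decisive. Finally, the analogous decomposition for $\Lambda_{\mathcal{P}}(g_1, \ldots, g_{k-1}, b, g_{k+1}, \ldots)$ follows by the same argument applied in the $k$-th coordinate after relabeling.
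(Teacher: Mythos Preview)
Your proposal is correct and follows essentially the same strategy as the paper's proof in the Appendix: both arguments separate $b=b^{in}+b^{out}$, reduce the error $\phi$ to a sum of terms in which only $O(1)$ scales survive per stopping cube (via the separation properties (i) and (ii)), and then bound each such term by the single-scale H\"older estimate exactly as in Lemma~\ref{gbSS1}. The only organizational difference is that the paper works cube-by-cube --- rewriting $\sum_{R\subset Q}\Lambda^{s_R}(g_1\mathbf{1}_R,\ldots,b^{in})$ as a double sum over neighbor pairs $R\sim L$ (your implicit use of property (i)) and then swapping $\Lambda^{s_R}$ for $\Lambda^{s_L}$ at the cost of $|s_R-s_L|<8$ single-scale terms --- whereas you work scale-by-scale with the indicator $\mathbf{1}_{E_s}$ and classify the discrepancy into types (a)--(c); these are two bookkeeping schemes for the same cancellation.
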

		
		\noindent In practice the error term $\phi$ does not cause any issues. This quantifies the `almost symmetry' of $\Lambda_{\mathcal{P}}$ mentioned above.  
		
		\vspace{5mm}
		
		The proof of Theorem \ref{abstractThm} is similar to the proof of the abstract Theorem C in \cite{roughSparse}, so we postpone the argument to Section 6 below.    
		
		\section{Multilinear Calder\'{o}n-Zygmund Operators}
		
		In this section we use Theorem \ref{abstractThm} to prove sparse bounds for multilinear Calder\'{o}n-Zygmund (CZ) operators that satisfy a certain smoothness conditon (defined below). For the theory of these operators see \cite{MFA} or \cite{GrTo}. In the process we prove an estimate which is necessary for the proof of Theorem \ref{MainThm} given in Section 4 below. The reader who who wishes to skip to the proof of Theorem \ref{MainThm} only needs to be familiar with the statement of Proposition \ref{cancLem} and the list of inequalities immediately following its proof. 
		
		Fix an $m$-linear CZ operator $T$ with kernel $K$. We have to suitably decompose the kernel to verify the initial assumptions of Theorem \ref{abstractThm}. Recall that we have the basic size estimate $$|K(x_1,..., x_m, x_{m+1})| \leq\frac{C_K}{(\underset{\substack{i \neq j}}{\max} \ |x_i - x_j|)^{md}},$$  
		
		\noindent where $1 \leq i,j \leq m+1$. This estimate motivates the following decomposition. Let $\vec{x} = (x_1,...,x_m, x_{m+1})$ with $x_{i} \in \R^{d}$, and define $$M_{\vec{x}} = (\underset{\substack{i \neq j}}{\max} \ |x_i - x_j|)^{md}$$ and $$M_{ij} = \{\vec{x} \in \R^{(m+1)d}: M_{\vec{x}} = |x_i - x_j| \}.$$ Note that there are $m(m-1)/2$ such sets $M_{ij}$. Also define $M_{ij}^{\ast}$ be the open subset of $\R^{(m+1)d}$ where  $$|x_i - x_j| > \frac{1}{2}|x_k - x_l|$$ for all $k,l$. Let $\psi^{ij}$ be a smooth partition of unity of $\R^{(m+1)d} \backslash \{0\}$ subordinated to the open cover $\{M_{ij}^{\ast}\}$, such that $\psi_{ij} = 1$ on $M_{ij}$. Let $K^{ij} = \psi^{ij}K$, so that our operator splits as $$T(f_1,...,f_m)(x_{m+1}) = \sum_{i,j} \int_{\R^{md}} K^{ij}(x_1, ..., x_m, x_{m+1})f_1 (x_1)...f_m (x_m) d\vec{x}.$$ Now choose the integer $l$ so that $2^{l-1} < m(m-1)/2 \leq 2^{l}$ and set $$A_{s} = \{\vec{x} \in \R^{(m+1)d} : \ 2^{s-l-1} < \sum_{i= 1}^{m}\sum_{j = i+1}^{m} |x_i - x_j| \leq 2^{s-1} \}.$$ Let $\phi$ be a smooth radial function supported in $A_0$ such that $\sum_{s}\phi(2^{-s}\vec{x}) = 1$ for $\vec{x} \neq 0$, and write $\phi_{s}(\vec{x}) = \phi(2^{-s}\vec{x}).$ Finally, let $$K_{s}^{ij}(\vec{x}) = \psi^{ij}(\vec{x})\phi_{s}(\vec{x})K(\vec{x}).$$ Notice that if $\vec{x} \in M_{ij} \cap A_{s},$ then $|x_i - x_j| \sim 2^{s}$. This leads to the decomposition \begin{align*} K^{ij}(x_1,...,x_m, x_{m+1}) &= \sum_{s} K_{s}^{ij}(x_1,...,x_m, x_{m+1}) \\ &= \sum_{s}K^{ij}(x_1,...,x_{m}, x_{m+1})\phi_{s}(\vec{x}).\end{align*}  Observe that if $|x_i- x_j| \sim 2^{s}$ in the region $M_{ij}^{\ast}$ then necessarily $|x_k - x_l| \leq 2^{s}$ for all other pairs, since $|x_k - x_l|  \leq 2|x_i - x_j|$ in $M_{ij}^{\ast}$. Given $1 \leq p < \infty$ we can easily prove the single-scale size estimate 
		
		\begin{equation}\label{LpEst} \left(\int_{\R^{md}}|K^{ij}_{s}(\vec{x})|^{p} \right)^{1/p}\lesssim 2^{-\frac{mds}{p'}}\end{equation} 
		
		\noindent for each $i,j$. This bound holds uniformly in the free variable. One also has $$\|K^{ij}_{s}\|_{L^{\infty}(\R^{md})} < 2^{-mds},$$ with a similar interpretation for the free variable. Therefore $[K]_{p} < \infty$ in this setting, for $1 \leq p \leq \infty$. We are also free to assume that $T$ satisfies the uniform truncation bound $C_T(r_1,...,r_m, \alpha) < \infty$ for some collection of exponents with $\frac{1}{r_1} + ... + \frac{1}{r_m} = \frac{1}{\alpha}$ (see \cite{MFA}, \cite{GrTo}).
		
		Assume for the rest of the section that $\mathcal{P}$ is a fixed stopping collection of cubes with top $Q$. We will work towards a proof of the estimates \eqref{assumptionL} needed to apply Theorem \ref{abstractThm} in this context. We begin by proving a single-scale estimate in a slightly more abstract setting, which will be useful for the rest of the paper.  
		
		\subsection{Single-Scale Estimates} Suppose $b = \sum_{L \in \mathcal{P}} b_L$ with $b_L$ supported in $L$. Also pick functions $g_1,...,g_{m+1}$ supported in $3Q$. The following lemma applies to all kernels $K = \sum_{s \in \Z}K_s$ satisfying the \textbf{(S)} properties from section 2. 
		
		\begin{lemma}\label{gbSS1} Suppose $b,g_1, g_2,...,g_{m+1}$ are defined as above and $1 < \beta \leq \infty.$ Let $T$ be an $m$-linear operator with kernel $K$, such that $K$ satisfies the \textbf{(S)} properties with $[K]_{\beta'} < \infty$. For a fixed $l \geq 1$ we have \begin{align*}\int_{\R^{(m+1)d}}\sum_{s}&|K_{s}|\cdot |b_{s-l}| (x_1)  |g_2(x_2)|... |g_{m+1}(x_{m+1})| \ d\vec{x} \\ &\lesssim [K]_{\beta'} |Q| \|b\|_{\mathcal{X}_{1}}\|g_2\|_{\mathcal{Y}_{\beta}}...\|g_{m+1}\|_{\mathcal{Y}_{\beta}}.\end{align*} Symmetric estimates also hold for the other tuples ($g_1, b, g_3,...,g_{m+1}$), ..., $(g_1,...,g_m, b).$ \end{lemma}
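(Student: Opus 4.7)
The plan is to reduce the $s$-sum to a sum over single cubes $L \in \mathcal{P}$ (each $L$ with $s_L = s-l$ contributes to exactly one scale) and estimate the contribution of each $L$ separately, using the single-scale support condition to localize the integration. Fix $L \in \mathcal{P}$ of scale $s_L$ and set $s = s_L + l$. By the support condition in \textbf{(S)}, the integrand vanishes unless $|x_i - x_1| \leq 2^s$ for every $i \geq 2$; combined with $x_1 \in L$, this forces each $x_i$ into a dilate $R_l L$ with $R_l \sim 2^l$. For each fixed $x_1 \in L$, I would then apply H\"older's inequality in $(x_2, \dots, x_{m+1})$ with exponents $\beta'$ on the kernel and $\beta$ on the product, bounding the inner integral by
\[
\|K_s(x_1, \cdot, \dots, \cdot)\|_{L^{\beta'}(\R^{md})} \cdot \Bigl\|\prod_{i=2}^{m+1} |g_i(x_i)|\mathbf{1}_{R_l L}(x_i)\Bigr\|_{L^\beta((R_l L)^m)}.
\]

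The first factor is $\leq [K]_{\beta'} \cdot 2^{-mds/\beta}$ directly from the definition of $[K]_{\beta'}$ after translating $x_1$ to the origin. The key observation for the second factor is that since the $g_i$'s depend on disjoint variables, Fubini turns the $L^\beta$-norm of the product into a product of $L^\beta$-norms, $\prod_{i=2}^{m+1}\|g_i \mathbf{1}_{R_l L}\|_{L^\beta(R_l L)}$. Each factor is then controlled via the $\mathcal{Y}_\beta$-norm: since $\hat L \subset R_l L$ up to a dimensional constant (for $l \geq 1$), any $x^* \in \hat L$ satisfies $(|R_l L|^{-1}\int_{R_l L}|g_i|^\beta)^{1/\beta} \lesssim M_\beta g_i(x^*)$, and taking the infimum gives $\|g_i \mathbf{1}_{R_l L}\|_{L^\beta} \lesssim |R_l L|^{1/\beta}\|g_i\|_{\mathcal{Y}_\beta} \lesssim 2^{ld/\beta}|L|^{1/\beta}\|g_i\|_{\mathcal{Y}_\beta}$. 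Plugging these in and using $|L| \sim 2^{d(s-l)}$, the factors $2^{-mds/\beta} \cdot 2^{mld/\beta} \cdot |L|^{m/\beta}$ cancel exactly, leaving a bound $\lesssim [K]_{\beta'} \prod_{i=2}^{m+1}\|g_i\|_{\mathcal{Y}_\beta}$ uniform in $x_1$, $l$, and $s$.

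Integrating this uniform estimate against $|b_L(x_1)|\,dx_1$ on $L$ produces a factor $\|b_L\|_{L^1}$. To finish, I would sum over $L \in \mathcal{P}$: since $L \subset \hat L$, for any $x \in \hat L$ we have $|L|^{-1}\int_L |b| \lesssim M_1 b(x)$, so $\|b_L\|_{L^1} \leq \int_L |b| \lesssim |L|\|b\|_{\mathcal{X}_1}$; then disjointness of $\{L\}$ inside $3Q$ yields $\sum_L |L| \leq 3^d |Q|$, producing the desired $|Q|\|b\|_{\mathcal{X}_1}$ factor. The endpoint $\beta = \infty$ is handled identically with the conventions $2^{-mds/\infty} = 1$ and $\|g_i\|_{\mathcal{Y}_\infty} = \|g_i\|_\infty$. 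The symmetric statements for the other tuples follow by relabeling coordinates and invoking the corresponding summand in the definition of $[K]_{\beta'}$.

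The only genuinely delicate step is the exponent bookkeeping that makes the $l$- and $s$-dependent powers cancel to yield a scale-uniform bound; everything else is a routine application of H\"older, Fubini, and the maximal-function characterization of the $\mathcal{Y}_\beta$ and $\mathcal{X}_1$ norms built into the definition of a stopping collection.
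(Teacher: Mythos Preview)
Your proposal is correct and follows essentially the same approach as the paper: reduce to a single cube $L\in\mathcal{P}$ at scale $s=s_L+l$, fix $x_1\in L$ and apply H\"older in the remaining variables with exponent $\beta'$ on the kernel and $\beta$ on each $g_i$, invoke $[K]_{\beta'}$ and the containment $\hat L\subset B(x_1,2^{s+10})$ to bound the averages by $\|g_i\|_{\mathcal{Y}_\beta}$, then integrate against $|b_L|$ and sum over $L$ by disjointness. The only cosmetic difference is that the paper routes the symmetric cases through the representations of Proposition~\ref{adjointDecomp} (which is really about how those integrals arise from $\Lambda_{\mathcal P}$), whereas you simply relabel variables and use the corresponding summand of $[K]_{\beta'}$; for the bare integral stated in the lemma your shortcut is legitimate.
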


		\begin{proof} From \eqref{ssRep1}, we see that it is enough to show that if $L \in \mathcal{P}$ and $s = s_L + l$, then \begin{align*}\int_{\R^{(m+1)d}} &|K_{s}| |b_L(x_1)| |g_{2}(x_2)|... |g_{m+1}(x_{m+1})| d\vec{x} \\ &\lesssim [K]_{\beta'} |L| \|b\|_{\mathcal{X}_{1}} \|g_2\|_{\mathcal{Y}_{\beta}}...\|g_{m+1}\|_{\mathcal{Y}_{\beta}}. \end{align*} We can then sum over $L$ and use disjointness to complete the proof. Begin by fixing $x_1 \in L$, with the goal of proving an $L^{\infty}_{x_1}$ bound for $$\int_{\R^{md}}|K_{s}(x_1,..., x_{m+1})| |g_{2}(x_2)|... |g_{m+1}(x_{m+1})| dx_2...dx_{m+1}.$$ We change variables and set $z_i = x_{i} - x_{1}$ for $i = 2,...,m+1$.  Notice that the support of $K_{s}$ implies $|z_i| \lesssim 2^{s}$ for each $i$. Now for all such $x_1$,
			
			\begin{align*}&\int_{\R^{md}} |K_{s}(x_1, x_2,...,x_{m+1})||g_{2}(x_2)|... |g_{m+1}(x_{m+1})| dz_2...dz_{m+1}. \\  &\leq \left(\int_{\R^{md}}|K_{s}(x_1, z_2 + x_1,...,z_{m+1} + x_1 )|^{\beta'} dz_2...dz_{m+1} \right)^{1/\beta'} \\ & \ \ \ \ \ \ \ \ \ \ \ \  \times \prod_{i=2}^{m+1}\left( \int_{B(x_1, 2^{s + 10})}|g_{i}|^{\beta} dz_i \right)^{1/\beta}\\ &\leq 2^{-\frac{mds}{\beta}}[K]_{\beta'} \cdot \prod_{i=2}^{m+1}\left( \int_{B(x_1, 2^{s + 10})}|g_{i}|^{\beta} dz_i \right)^{1/\beta} \\ &\lesssim [K]_{\beta'} \prod_{i=2}^{m+1} \inf_{\hat{L}}M_{\beta}(g_{i}) \cdot \\ & \lesssim [K]_{\beta'} \|g_2\|_{\mathcal{Y}_{\beta}}...\|g_{m+1}\|_{\mathcal{Y}_{\beta}}. \end{align*} Note that $s > s_{L}$ and $x_1 \in L$, so $\hat{L}$ is contained in $B(x_1, 2^{s + 10})$. This justifies adding the infinums in the fourth line. As a consequence, we get \begin{align*}\int_{\R^{(m+1)d}} |K_{s}| &|b_L(x_1)| |g_{2}(x_2)|... |g_{m+1}(x_{m+1})| d\vec{x} \\ &\lesssim [K]_{\beta'} |L| \|b\|_{\mathcal{X}_{1}} \|g_2\|_{\mathcal{Y}_{\beta}}...\|g_{m+1}\|_{\mathcal{Y}_{\beta}},\end{align*} since $\|b_{L}\|_{L^{1}} \lesssim |L|\|b\|_{\mathcal{X}_{1}}$. This completes the proof of the main estimate. The relevant estimates for the tuples $(g_1, b, g_3, ..., g_{m+1})$, etc., can be proved in just the same way using the representations from Proposition \ref{adjointDecomp}. Notice that the error term $\phi$ from this proposition has an acceptable contribution.\end{proof}
		
		\noindent If $\Lambda$ is the $(m+1)$-linear form associated to an $m$-linear CZ operator $T$, then from \eqref{ssRep1} we see that \begin{equation} \Lambda_{\mathcal{P}}(b,g_{2},..., g_{m+1}) = \int_{\R^{(m+1)d}}\sum_{ij}\sum_{s}\sum_{l \geq 1}K_{s}^{ij}b_{s - l} (x_1)g_2 (x_2)... g_{m+1}(x_{m+1}) d\vec{x}.\end{equation} The number of pairs $i,j$ is bounded by a constant depending only on $m$, so it is clear that the result of Lemma \ref{gbSS1} applies to this operator as well (with a possibly different constant).

		\subsection{Cancellation Estimates} Let $T$ be an $m$-linear Calder\'{o}n-Zygmund operator as defined above, such that \begin{equation}\label{czSmooth}|K(x_1,x_2,..., x_{m+1}) - K(x',x_2,..., x_{m+1})| \leq \frac{A|x_1 - x'|^{\epsilon}}{ (\underset{i \neq j}{\max} |x_i - x_j| )^{2d + \epsilon} }\end{equation} for some $0 < \epsilon \leq 1$ whenever $|x_1 - x'| \leq \frac{1}{m+1}(|x_2 - x_1| + ... + |x_{m+1} - x_1|),$ and suppose that symmetric estimates hold with respect to the other variables $x_i$ (with the same constant). If $C_K$ is the constant from the basic Calder\'{o}n-Zygmund size estimate, we also assume that $C_K \leq A$. We then call \eqref{czSmooth} the `\textit{$\epsilon$-smoothness}' property of $T$, and $A$ the `smoothness' constant. We will use \eqref{czSmooth} to sum over $l$ in the context of Lemma \ref{gbSS1}, and as a result prove the required estimates \eqref{assumptionL} for the sparse bound. It is possible that this smoothness condition may be relaxed, but it is all we need below for the proof of Theorem \ref{MainThm}.  
		
		Suppose $b = \sum_{L\in \mathcal{P}} b_L$ and $g_1,...,g_{m+1}$ are as in the last section, with the additional property that $\int b_L = 0$ for each $L$. We will use the cancellation of $b$ and \eqref{czSmooth} to estimate $$\Lambda_{\mathcal{P}}(b, g_2,..., g_{m+1}), ..., \Lambda_{\mathcal{P}}(g_1,...,g_m, b).$$ Before proceeding, we need to prove a technical lemma related to the truncation. 
		
		\begin{lemma}\label{truncLem1} Suppose $|x_1 - x'| \leq 2^{s-l}$ for some integer $l \geq 1$, and let $$\psi_{s}^{ij}(x_1,...,x_{m+1}) = \psi^{ij}(x_1,...,x_{m+1})\phi_{s}(x_1,...,x_{m+1}).$$ Then \begin{align}\nonumber &\big| \sum_{i,j}K(x_1,x_2,...,x_{m+1})\psi_{s}^{ij}(x_1,x_2,...,x_{m+1}) \\ \nonumber & \ \ \ \ \ \ \ \  - K(x',x_2,...,x_{m+1})\psi_{s}^{ij}(x',x_2,...,x_{m+1}) \big| \\  \label{truncLem1.1} &\lesssim \sum_{i,j}\textbf{1}_{M_{ij}^{\ast}}(\vec{x})\phi_{s}(x_1,...,x_{m+1})\left|K(x_1,...,x_{m+1}) - K(x',...,x_{m+1}) \right| \\ \nonumber & \ \ \ \ \ \ \ \  + 2^{-l}\|\grad \phi_{0} \|_{\infty}|K(x',...,x_{m+1})|. \end{align} Symmetric estimates also hold for the other variables under suitable assumptions. \end{lemma}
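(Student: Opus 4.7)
The plan is to split the difference on the left-hand side additively into two pieces in the standard ``add-and-subtract'' way, namely
\[
\sum_{i,j}\psi_s^{ij}(x_1,\dots)\bigl[K(x_1,\dots)-K(x',\dots)\bigr]\;+\;\sum_{i,j}K(x',\dots)\bigl[\psi_s^{ij}(x_1,\dots)-\psi_s^{ij}(x',\dots)\bigr],
\]
and then estimate each piece separately. The first piece is the ``kernel-difference'' piece and corresponds directly to the first term on the right-hand side of \eqref{truncLem1.1}. The second piece is a ``cutoff-difference'' piece, and the main trick is that after summing over $i,j$ this piece collapses into a single difference involving only $\phi_s$, which then yields the $2^{-l}\|\nabla\phi_0\|_\infty$ factor.

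For the first piece I would use the definition $\psi_s^{ij}=\psi^{ij}\phi_s$ together with the fact that $0\le \psi^{ij}\le 1$ and $\psi^{ij}$ is supported in $M_{ij}^{*}$, which immediately gives
\[
\Bigl|\sum_{i,j}\psi_s^{ij}(x_1,\dots)\bigl[K(x_1,\dots)-K(x',\dots)\bigr]\Bigr|\;\le\;\sum_{i,j}\mathbf{1}_{M_{ij}^{*}}(\vec{x})\,\phi_s(\vec{x})\,|K(x_1,\dots)-K(x',\dots)|,
\]
matching the first term in the claimed bound.

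For the second piece, the key observation is that $\{\psi^{ij}\}$ is a partition of unity on $\R^{(m+1)d}\setminus\{0\}$, so $\sum_{i,j}\psi^{ij}\equiv 1$ and therefore $\sum_{i,j}\psi_s^{ij}=\phi_s$. Consequently,
\[
\sum_{i,j}\bigl[\psi_s^{ij}(x_1,\dots)-\psi_s^{ij}(x',\dots)\bigr]\;=\;\phi_s(x_1,\dots)-\phi_s(x',\dots),
\]
and the second piece becomes simply $K(x',\dots)[\phi_s(x_1,\dots)-\phi_s(x',\dots)]$. Because $\phi_s(\vec{x})=\phi(2^{-s}\vec{x})$, the partial gradient in the first slot satisfies $\|\nabla_{x_1}\phi_s\|_\infty\le 2^{-s}\|\nabla\phi_0\|_\infty$, so by the mean value theorem and the hypothesis $|x_1-x'|\le 2^{s-l}$,
\[
|\phi_s(x_1,\dots)-\phi_s(x',\dots)|\;\le\;2^{-s}\|\nabla\phi_0\|_\infty\cdot 2^{s-l}\;=\;2^{-l}\|\nabla\phi_0\|_\infty,
\]
which gives exactly the second term in \eqref{truncLem1.1}. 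The symmetric estimates for the other variables follow by the same argument, since the construction of $\psi^{ij}$ and $\phi_s$ is symmetric in all $m+1$ slots and the gradient bound holds in every variable.

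There is no real obstacle here beyond bookkeeping; the only point that requires care is recognizing that the partition-of-unity identity $\sum_{i,j}\psi^{ij}\equiv 1$ is what allows the otherwise complicated sum of cutoff differences to collapse into a single smooth difference that can be bounded by the gradient of $\phi_0$ alone. Without this collapse one would be stuck trying to control derivatives of the individual $\psi^{ij}$, whose supports are less convenient.
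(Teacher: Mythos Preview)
Your proposal is correct and follows essentially the same approach as the paper: the same add-and-subtract splitting, the same use of $\psi^{ij}\le \mathbf{1}_{M_{ij}^{*}}$ for the kernel-difference piece, and the same gradient estimate on $\phi_s$ for the cutoff-difference piece. In fact you make explicit the partition-of-unity identity $\sum_{i,j}\psi^{ij}\equiv 1$ (hence $\sum_{i,j}\psi_s^{ij}=\phi_s$), which the paper leaves implicit in the phrase ``summing over $i,j$.''
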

		
		\begin{proof} We will prove the lemma for the pair $x_1, x'.$ The proof for the other variables is identical. For notational simplicity, below we will write $K = K(x_1,...,x_{m+1})$ and $\widetilde{K} = K(x',x_2,...,x_{m+1})$. After adding and subtracting $\widetilde{K}\cdot\psi^{ij}_{s}$ to each term of the sum, we can estimate \eqref{truncLem1.1} by $$\sum_{ij} |K - \widetilde{K}| \psi^{ij}_{s}(\vec{x}) + \left|\sum_{ij} (\psi_{s}^{ij}(x',x_2,...x_{m+1}) - \psi_{s}^{ij}(x_1,...,x_{m+1})) \right|\cdot |\widetilde{K}|.$$ So we have to show that $$ \left|\sum_{ij}(\psi_{s}^{ij}(x',x_2,...x_{m+1}) - \psi_{s}^{ij}(x_1,...,x_{m+1})) \right|  \lesssim 2^{-l}\|\phi_{0} '\|_{\infty}$$ when $|x_1 - x'| \leq 2^{s-l}.$ Note that under this hypothesis \begin{align*}|\phi_{s}(x_1, x_2,...,x_{m+1}) - \phi_{s}(x', x_{2},..., x_{m+1})| &\lesssim \|\grad \phi_{0} \|_{\infty} 2^{-s}|x' - x_1| \\ &\lesssim 2^{-l}\|\grad \phi_{0} \|_{\infty},\end{align*} by the scaling and smoothness of $\phi_{s}$. Summing over $i,j$ and applying this estimate proves the lemma. \end{proof}

		\noindent We can now prove the main result of the section. 
		
		\begin{prop}\label{cancLem} Let $K$ be an $m$-linear Calder\'{o}n-Zygmund kernel such that the $\epsilon$-smoothness condition \eqref{czSmooth} holds with constant $A_{\epsilon}$. Fix $1 < p \leq \infty$. Then \begin{align*}\big|\sum_{ij}\sum_{l \geq 1}\int_{\R^{(m+1)d}}&\sum_{s} K^{ij}_{s}\cdot b_{s - l} (x_1)g_2(x_2)... g_{m+1}(x_{m+1}) d\vec{x} \big| \\ &\lesssim_{\epsilon} A_{\epsilon}|Q| \|b\|_{\dot{\mathcal{X}_{1}}}\|g_2\|_{\mathcal{Y}_{p}}...\|g_{m+1}\|_{\mathcal{Y}_{p}}.\end{align*} Analogous estimates also hold for $\Lambda(g_1, b, g_3,...,g_{m+1}), ..., \Lambda(g_1,...,g_m, b).$ \end{prop}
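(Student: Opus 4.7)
My plan is to use the cancellation $\int b_L = 0$ for each $L \in \mathcal{P}$ to upgrade the bound from Lemma \ref{gbSS1}, which is uniform in $l$, to a bound with geometric decay $2^{-l\epsilon}$, and then sum in $l$. Concretely, for each fixed $l \geq 1$, I would reduce matters to proving
\[
\Big|\sum_{ij}\sum_{L \in \mathcal{P}} \int K^{ij}_{s_L + l}(x_1,\vec x)\, b_L(x_1)\, g_2(x_2)\cdots g_{m+1}(x_{m+1})\, d\vec x\Big| \lesssim A_\epsilon\, 2^{-l\epsilon}\, |Q|\, \|b\|_{\dot{\mathcal{X}}_1}\prod_{i=2}^{m+1}\|g_i\|_{\mathcal{Y}_p},
\]
after which summing the geometric series in $l$ and invoking the representation \eqref{ssRep1} gives the proposition.

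To obtain the factor $2^{-l\epsilon}$, I would fix $L$, let $c_L$ denote its center, and use $\int b_L = 0$ to write the inner integrand as $\bigl[K^{ij}_{s_L+l}(x_1,\vec x) - K^{ij}_{s_L+l}(c_L,\vec x)\bigr] b_L(x_1)$. Since $x_1 \in L$ gives $|x_1 - c_L| \lesssim 2^{s_L} = 2^{s-l}$, Lemma \ref{truncLem1} (summed over $ij$) bounds $|\sum_{ij}K^{ij}_{s_L+l}(x_1,\vec x) - K^{ij}_{s_L+l}(c_L,\vec x)|$ by two pieces: the smoothness piece, controlled by the $\epsilon$-smoothness hypothesis \eqref{czSmooth} as $\lesssim A_\epsilon\, 2^{-l\epsilon}\, 2^{-smd}\,\mathbf{1}_{\{|x_i-x_j|\lesssim 2^s\}}$, and the truncation-error piece, controlled using the basic CZ size estimate as $\lesssim A_\epsilon\, 2^{-l}\, 2^{-smd}\,\mathbf{1}_{\{|x_i-x_j|\lesssim 2^s\}}$. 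With this pointwise majorant in hand I would repeat the argument in the proof of Lemma \ref{gbSS1}: apply Hölder in $(x_2,\dots,x_{m+1})$ with exponents $(p',p,\dots,p)$, use that the $L^{p'}$-norm of the effective kernel is $\lesssim A_\epsilon\, 2^{-l\epsilon}\, 2^{-smd/p}$, and bound $\prod\|g_i\|_{L^p(B(x_1,2^{s+10}))} \lesssim 2^{smd/p}\prod_{i\geq 2}\inf_{\hat L} M_p g_i$. The factors of $2^{smd/p}$ cancel, integration of $|b_L|$ produces $\|b_L\|_1 \lesssim |L|\|b\|_{\dot{\mathcal{X}}_1}$, and disjointness of $\{L\}\subset 3Q$ yields the $|Q|$ factor.

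For the remaining tuples $(g_1,b,g_3,\dots,g_{m+1}),\dots,(g_1,\dots,g_m,b)$, I would invoke Proposition \ref{adjointDecomp} to rewrite each such form as a sum of (i) a term of exactly the same structure as above but with $b$ shifted into a different slot (and with $b$ replaced by $b^{in}$, which still satisfies the mean-zero condition on each $L$ since it is a sub-sum of the original $b_L$'s), and (ii) an error term $\phi$ that is already controlled by $[K]_{p'}|Q|\|b\|_{\mathcal{X}_1}\prod\|g_i\|_{\mathcal{Y}_p}$; since we established $[K]_{p'} \lesssim A_\epsilon$ via \eqref{LpEst}, this is acceptable. The preceding cancellation argument, with the variable $x_1$ replaced by the appropriate variable and the smoothness hypothesis \eqref{czSmooth} used in that slot (symmetrically), then handles the main term.

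The main obstacle I anticipate is bookkeeping around Lemma \ref{truncLem1}: one must verify that the truncation-error contribution $2^{-l}\|\nabla\phi_0\|_\infty |K(c_L,\vec x)|$ is supported where $|c_L - x_i|\sim 2^s$ for $i\geq 2$, so that $|K(c_L,\vec x)|\lesssim A\, 2^{-smd}$ and the effective kernel enjoys the same $L^{p'}$-normalization as the smoothness piece; once this is set up, the decay exponent $\min(\epsilon,1) = \epsilon$ guarantees summability in $l$ and the argument closes.
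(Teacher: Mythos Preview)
Your proposal is correct and follows essentially the same approach as the paper's proof: use the mean-zero condition on $b_L$ together with Lemma \ref{truncLem1} to extract the $2^{-l\epsilon}$ (plus $2^{-l}$) decay, then argue as in Lemma \ref{gbSS1} and sum the geometric series in $l$, handling the adjoint forms via Proposition \ref{adjointDecomp}. The only detail you gloss over, and which the paper makes explicit, is that the hypothesis of the $\epsilon$-smoothness condition \eqref{czSmooth} requires $|x_1 - c_L| \leq \frac{1}{m+1}\sum_{i\geq 2}|x_i - x_1|$, which is only guaranteed once $l$ exceeds a threshold $c_m$ depending on $m$; the finitely many scales $l < c_m$ are then handled directly by Lemma \ref{gbSS1}.
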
 
		
		\begin{proof} By $\eqref{ssRep1}$, it suffices to prove 
			
			\begin{align}\label{cancellation1} \sum_{l \geq c_{m}} \big| \int_{\R^{(m+1)d}}&\sum_{i,j}K^{ij}_{s_{L} + l} b_{L}(x_1)g_2(x_2)...g_{m+1}(x_{m_1})d\vec{x} \big| \\ \nonumber &\lesssim_{\epsilon} A_{\epsilon}|L| \|b\|_{\dot{\mathcal{X}_{1}}}\|g_2\|_{\mathcal{Y}_{p}}...\|g_{m+1}\|_{\mathcal{Y}_{p}},\end{align} where $c_{m}$ is some constant depending on the multilinearity parameter $m$, to be determined below. We can then use disjointness of the collection to sum over $L$, and finitely many applications of Lemma \ref{gbSS1} to handle the cases where $l < c_{m}$. Let $x'  = c_L$, the center of $L$. Note that if $s = s_L + l$ then $|x_1 - x'| \leq 2^{s-l}$ since $x_1 \in L$. We use the mean-zero condition on $b_L$ to replace $\sum_{i,j}K^{ij}_{s_{L} + l}(x_1,x_2,..., x_{m+1})$ by \begin{equation}\label{cancLem1.1} \sum_{i,j}\left(K^{ij}_{s_{L} + l}(x_1,x_2,..., x_{m+1}) - K^{ij}_{s_{L} + l}(x',x_2,..., x_{m+1})\right),\end{equation} and then estimate \eqref{cancLem1.1} using Lemma \ref{truncLem1}.  To complete the proof, we can argue as in the proof of Lemma \ref{gbSS1}, using the cancellation estimate \eqref{czSmooth} in place of the basic size estimate for $|K^{ij}_{s}|$ (with $s = s_L + l$). 
			
			We now sketch the rest of the proof. Set $F = K - \widetilde{K}$ and observe that the first term from the estimate in \eqref{truncLem1.1} contributes $$\sum_{ij} \int_{\R^{(m+1)d}} \textbf{1}_{M_{ij}^{\ast}}(\vec{x})\phi_{s}(\vec{x}) |F(x_1,...,x_{m+1}) b_L(x_1) g_2(x_2)...g_{m+1}(x_{m+1})| d\vec{x}.$$ We now fix $x_1 \in L$, with the goal of proving an $L^{\infty}_{x_1}$ bound for $$\int_{\text{supp}K^{ij}_{s} }|F(x_1,..., x_{m+1})| |g_{2}(x_2)|... |g_{m+1}(x_{m+1})| dx_2...dx_{m+1}.$$ Once again make the change of variables $z_i = x_{i} - x_{1}$ for $i = 2,...,m+1$, so in particular $|z_i| \lesssim 2^{s}$. Then we proceed as in Lemma \ref{gbSS1}, with $F(x_1, x_1 + z_2,...,x_1 + z_2 )$ in place of the $K_{s}$ term. The only change is in the estimate involving the kernel term. Since we are working at scale $\sim s$, in any region $M_{ij}^{\ast}$ there is some uniform constant $\alpha_{m}$ depending only on $m$ such that $|x_i - x_j| > 2^{s - \alpha_m}.$ Therefore we must have either $|x_1 - x_i| \geq 2^{s-\alpha_m -1 }$ or $|x_1 - x_j| \geq 2^{s- \alpha_m - 1}.$ It follows that we can choose some uniform $c_{m}$ such that if $l \geq c_{m}$ and $|x_1 - x'| \leq 2^{s-l}$, then $$|x_1 - x'| \leq \frac{1}{m + 1}(|z_2| + ... + |z_{m+1}|).$$ Hence we can apply the cancellation estimate \eqref{czSmooth} for $|F(x_1, x_1 + z_2,...,x_1 + z_{m+1})|.$ A straightforward calculation shows that for this choice of $s$ and any $i,j$ and $x_1\in L,$ we have \begin{equation}\label{cancLem1.2} \left(\int_{ \text{supp $K^{ij}_{s}(x_1, \cdot)$}}\frac{|x_1 - x'|^{\epsilon p'}}{ (|x_i - x_j|)^{2dp' + \epsilon p'} } \ dx_2...dx_{m+1} \right)^{1/p'} \ \lesssim \ 2^{-\epsilon l}2^{-\frac{mds}{p}}.\end{equation} This allows us to proceed as in the proof of Lemma \ref{gbSS1}, with an addition of the $2^{-\epsilon l}$ term. The second term from Lemma \ref{truncLem1} can be handled using the same methods from Lemma \ref{gbSS1}. Since the number of pairs $i,j$ is bounded by some $c_{m}$, we ultimately see that the left side of \eqref{cancellation1} is bounded by \begin{align*} c_{m}\sum_{l \geq 1}&(2^{-\epsilon l} + 2^{-l}\|\grad \phi_{0}\|_{\infty})A_{\epsilon}|L|  \|b\|_{\dot{\mathcal{X}_{1}}}\|g_2\|_{\mathcal{Y}_{p}}...\|g_{m+1}\|_{\mathcal{Y}_{p}} \\ &\lesssim_{\epsilon}A_{\epsilon} |L| \|b\|_{\dot{\mathcal{X}_{1}}}\|g_2\|_{\mathcal{Y}_{p}}...\|g_{m+1}\|_{\mathcal{Y}_{p}},\end{align*} and summing over $L$ proves the desired estimate. To prove the remaining part of the proposition, use the representations from Proposition \ref{adjointDecomp} and repeat the argument just given with respect to the new variables. \end{proof}
		
		\noindent As a consequence of the preceding proposition and \eqref{ssRep1}, we see that\begin{align*} |\Lambda_{\mathcal{P}}(b, g_2, g_3,..., g_{m+1})| &\leq A_{\epsilon}|Q|\|b\|_{\dot{\mathcal{X}}_{1}}\|g_2\|_{\mathcal{Y}_{p}}\|g_3\|_{\mathcal{Y}_{p}}...\|g_{m+1}\|_{\mathcal{Y}_{p}} \\  |\Lambda_{\mathcal{P}}(g_1, b, g_3, ..., g_{m+1})| &\leq A_{\epsilon} |Q|\|g_1\|_{\mathcal{Y}_{p}}\|b\|_{\dot{\mathcal{X}}_{1}}\|g_3\|_{\mathcal{Y}_p} ... \|g_{m+1}\|_{\mathcal{Y}_{p}} \\  |\Lambda_{\mathcal{P}}(g_1, g_2, b, g_4, ..., g_{m+1})| &\leq A_{\epsilon} |Q|\|g_1\|_{\mathcal{Y}_{p}}\|g_2\|_{\mathcal{Y}_{p}}\|b\|_{\dot{\mathcal{X}}_{1}}\|g_4\|_{\mathcal{Y}_{p}}...\|g_{m+1}\|_{\mathcal{Y}_{p}} \\ &\vdots \\  |\Lambda_{\mathcal{P}}(g_1, g_2, ..., g_{m}, b)| &\leq A_{\epsilon} |Q|\|g_1\|_{\mathcal{Y}_{p}}\|g_2\|_{\mathcal{Y}_{p}}...\|g_{m}\|_{\mathcal{Y}_{p}}\|b\|_{\dot{\mathcal{X}}_1}, \end{align*}  where $A_{\epsilon}$ is the smoothness constant from \eqref{czSmooth}. These estimates hold uniformly over finite truncations and stopping collections. We can therefore apply Theorem \ref{abstractThm} to prove the following. 
		
		\begin{thm}\label{CZsparse} Let $T$ be an $m$-linear Calder\'{o}n-Zygmund operator satisfying the $\epsilon$-smoothness condition \eqref{czSmooth} with constant $A_{\epsilon}$. Suppose $T$ is bounded from $L^{r_1}(\R^{d})\times...\times L^{r_m}(\R^{d}) \rightarrow L^{\alpha}(\R^{d}),$ such that $C_T(r_1,...,r_m, \alpha) < \infty$. Also fix $1 < p \leq \infty$. If $\Lambda$ is the $(m+1)$-linear form associated to $T$, then for any $f_1,..., f_{m+1}$ with $f_1 \in L^{1}(\R^{d})$ and $f_i \in L^{p}(\R^{d})$ for $i =2,...,m+1,$ we have   $$|\Lambda(f_1,f_2,...,f_{m+1})| \leq c_{d}\left[C_T + A_{\epsilon}\right] \sup_{\mathcal{S}} \text{PSF}_{\mathcal{S}}^{(1,p,...,p)}(f_1,f_2,...,f_{m+1}).$$  
		\end{thm}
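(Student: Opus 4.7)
The proof is essentially an assembly step: most of the analytic work has already been done in Sections 2 and 3, and the task is to check that the hypotheses of Theorem \ref{abstractThm} are met and then unpack its conclusion.

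First I would record the preparatory facts already established. The Calder\'on-Zygmund kernel $K$ of $T$ has been decomposed as $K = \sum_{s \in \Z} K_s$ with $K_s = \sum_{i,j} K_s^{ij}$ via the smooth partition $\psi^{ij}$ subordinated to the cover $\{M_{ij}^\ast\}$ together with the Littlewood-Paley-type cutoffs $\phi_s$. This decomposition satisfies the support condition $|x_k - x_l| \lesssim 2^s$ on $\mathrm{supp}(K_s)$ required for \eqref{ss1}, and the single-scale estimate \eqref{LpEst} gives $[K]_\beta < \infty$ uniformly for all $1 \leq \beta \leq \infty$, so in particular \eqref{ss2} holds with $p = 1$. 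The hypothesis $C_T(r_1,\dots,r_m,\alpha) < \infty$ is assumed outright.

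Next I would verify the testing inequalities \eqref{assumptionL} with the tuple $\vec{p} = (1,p,p,\dots,p)$. These are exactly the displayed list of bounds immediately following the proof of Proposition \ref{cancLem}: the first inequality (the one requiring cancellation of $b$ in the first slot) comes directly from Proposition \ref{cancLem} applied to the representation \eqref{ssRep1}, with $C_L = c_m A_\epsilon$. Each of the symmetric inequalities is obtained by combining Proposition \ref{adjointDecomp} (which rewrites $\Lambda_{\mathcal{P}}$ with $b$ in the $k$-th slot as a series of the form \eqref{ssRep1} up to an error term controlled by $[K]_{p'} \lesssim A_\epsilon$) with the symmetric statement of Proposition \ref{cancLem}. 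The $\|\cdot\|_{\mathcal{Y}_p}$ norms on the $g_i$ arise because Lemma \ref{gbSS1} was applied with $\beta = p$; note that $p \leq \infty$ is allowed, with $p = \infty$ simply giving $L^\infty$-bounds on the $g_i$.

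At this point Theorem \ref{abstractThm} applies directly with constants $C_T$ and $C_L \lesssim A_\epsilon$, producing
\[
\sup_{\mu,\nu} |\Lambda_\mu^\nu(f_1,\dots,f_{m+1})| \leq c_d \left[C_T + A_\epsilon\right] \sup_{\mathcal{S}} \text{PSF}_{\mathcal{S}}^{(1,p,\dots,p)}(f_1,\dots,f_{m+1})
\]
for all compactly supported $f_j$. The final step is to pass from truncations and compactly supported functions to the full form $\Lambda$ applied to $f_1 \in L^1$ and $f_i \in L^p$. The removal of truncations is handled exactly by the argument of Remark 2.1: the weak limit $T_0$ of $T^\epsilon$ differs from $T$ by multiplication by some $\phi \in L^\infty$, and Section 6.1 (cited in the remarks on Theorem \ref{abstractThm}) supplies the admissible PSF bound for multiplication operators, so the passage is harmless. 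Density in the target spaces is standard since $L^\infty \cap L^1$ is dense in $L^1$ and $L^\infty_c$ is dense in $L^p$ for $p < \infty$; for $p = \infty$ one can use a monotone convergence argument on the sparse form itself.

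I expect no serious obstacle in executing this plan: all of the analytic content is already in the preceding sections. The only point requiring a moment's care is making sure the density/truncation argument is compatible with having $f_1$ only in $L^1$ rather than $L^p$, but this is routine once one uses the symmetric role of the slots built into the assumptions \eqref{assumptionL}.
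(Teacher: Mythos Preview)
Your proposal is correct and follows essentially the same approach as the paper: the paper's argument for Theorem \ref{CZsparse} consists precisely of the displayed list of inequalities following Proposition \ref{cancLem} together with the sentence ``We can therefore apply Theorem \ref{abstractThm},'' and the truncation/density passage is handled exactly as you describe via Remarks 2.1 and 2.3.
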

		
		\noindent As we mentioned in the introduction, sparse domination results for multilinear Calder\'{o}n-Zygmund operators have been known for a few years. The main novelty here is that we do not appeal to local mean oscillation \cite{DyCalc} or maximal truncation estimates \cite{KLi}. We obtain a subset of the known weighted estimates for multilinear Calder\'{o}n-Zygmund operators as an easy corollary of Theorem \ref{CZsparse}. See \cite{MultiWeightNon} and \cite{DyCalc} for some examples.  
		
		\vspace{5mm}
		
		\noindent\textbf{Remark.} The same sparse domination result will hold if we assume an abstract Dini condition for the kernel, as in Lemma 3.2 in \cite{roughSparse}. Given $1 < p \leq \infty$, let $$\Gamma_{p}(h) = \left(\|K_s(x, x + \cdot,..., x+\cdot) - K_s(x+h, x + \cdot,..., x+\cdot)\|_{p} +...  \right).$$ The last `...' before the end of the parenthesis indicates the other possible symmetric terms. Now define $$\omega_{j,p}(K):= \sup_{s \in \Z}  \  2^{\frac{sdm}{p'}} \sup_{x \in \R^{d}}\sup_{\substack{ h \in \R^{d} \\ \|h\|_{\infty} < 2^{s-j-c_m }}} \Gamma_{p}(h).$$ Then if $[K]_{p} < \infty$ and $$[K]_{1,p} := \sum_{j=1}^{\infty}\omega_{j,p}(K) < \infty,$$ we can prove an analogue of Proposition \ref{cancLem} and show that the assumptions of Theorem \ref{abstractThm} are satisfied. The argument is similar to the proof of Lemma 3.2 in \cite{roughSparse}. However, note that we would have to do essentially the same amount of work as above to check that this condition is satisfied by the $\epsilon$-smooth Calder\'{o}n-Zygmund operators considered in this section. 
		
		\section{Rough Bilinear Singular Integrals}
		
		\indent
		
		In this section we use our sparse domination theorem to prove Theorem \ref{MainThm}. Recall that $$T_{\Omega}(f_{1}, f_{2})(x) = \textrm{p.v. } \int_{\R^{d}}\int_{\R^{d}} f_{1}(x - y_{1})f_{2}(x - y_{2})\frac{\Omega((y_1, y_2)/|(y_1, y_2)|)}{|(y_1, y_2)|^{2d}} \ dy_1 dy_2,$$ with $\Omega \in L^{\infty}(S^{2d-1})$ and $\int_{S^{2d-1}} \Omega = 0$. We will use the results from the paper \cite{BiRough} by Grafakos, He, and Honz\'{\i}k. We quickly review their initial decomposition of the kernel $K$ of $T_{\Omega}$. Let $\{\beta_{j}\}_{j \in \Z}$ be a smooth partition of unity on $\R^{2d} \backslash \{0\}$, with $\beta_{j}$ adapted to the annulus $\{2^{j-1} < |z| < 2^{j+1}\}$. Let $\Delta_{j}$ denote the standard Littlewood-Paley operator localizing frequencies at scale $2^{j}$. Then define $K_{j}^{i} = \Delta_{j-i}\beta_{i}K$ and decompose $K = \sum_{j\in \Z}K_{j},$ with $$K_{j} = \sum_{i \in \Z}K_{j}^{i}.$$ Write $T^{j}$ for the operator associated to the kernel $K_{j}$. 
		
		\begin{prop}[Prop. 3 and 5 in \cite{BiRough}]\label{GrafThm} There exists $0 <\delta < 1$ so that $$\|T_{j}(f_1,f_2)\|_{L^{1}} \leq C2^{-\delta j}\|\Omega\|_{\infty}\|f\|_{L^2} \|g\|_{L^2}$$ when $j \geq 0,$ and $$\|T_{j}(f_1,f_2)\|_{L^{1}} \leq C2^{-(1-\delta)|j|}\|\Omega\|_{\infty}\|f\|_{L^2} \|g\|_{L^2}$$ when $j < 0$. 
		\end{prop}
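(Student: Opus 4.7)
The plan is to derive both estimates by a bilinear adaptation of the Fourier-analytic method pioneered in the linear case by Duoandikoetxea and Rubio de Francia. The starting point is the representation $K_j = \sum_i K_j^i$ with $K_j^i = \Delta_{j-i}\beta_i K$, which presents each piece as spatially localized at scale $\sim 2^i$ and frequency localized at scale $\sim 2^{j-i}$. By translation--dilation invariance one may reduce to the reference piece $K_j^0$ and prove multiplier-type Fourier-side estimates on $\widehat{K_j^0}(\xi_1,\xi_2)$; the pieces are then reassembled using almost-orthogonality in the spatial scale index $i$, which is available because the projectors $\Delta_{j-i}$ and $\Delta_{j-i'}$ are near-orthogonal for $|i-i'|$ large.

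The low-frequency regime $j<0$ is the easier direction. Here the filter $\Delta_j$ projects onto frequencies $|\xi|\sim 2^j\ll 1$, which are seen by the compactly supported $\beta_0 K$ only through its smooth, low-order Fourier content. The vanishing moment condition $\int_{S^{2d-1}}\Omega\,d\sigma = 0$ kills the zero-frequency portion of $\widehat{\beta_0 K}$, so a first-order Taylor expansion at the origin produces a multiplier bound with a gain of order $2^{j(1-\delta)}$. Converting this Fourier-side estimate to the physical-side $L^2\times L^2\to L^1$ bound is routine bilinear multiplier theory, and yields the claimed decay $2^{-(1-\delta)|j|}\|\Omega\|_\infty$.

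The high-frequency regime $j\geq 0$ is the main obstacle, because the Taylor argument above offers no gain. The essential new tool is a Meyer-type wavelet decomposition of $\Omega$ on the sphere,
\begin{equation*}
\Omega \;=\; \sum_\mu \lambda_\mu \psi_\mu,
\end{equation*}
where $\{\psi_\mu\}$ is a smooth wavelet basis on $S^{2d-1}$, each $\psi_\mu$ has mean zero (the constant-coefficient contribution vanishes by $\int\Omega\,d\sigma = 0$), and the coefficients satisfy quantitative bounds derived from $\|\Omega\|_\infty$ together with the smoothness of the wavelets. This splits $K_j^0 = \sum_\mu \lambda_\mu K_j^{0,\mu}$, and for each wavelet piece one seeks a Fourier-side estimate of the form $|\widehat{K_j^{0,\mu}}(\xi_1,\xi_2)|\lesssim 2^{-\delta(j-c\mu)_+}$ by coupling the smoothness and vanishing moments of $\psi_\mu$ with the band-limiting at frequency $\sim 2^j$. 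The delicate balance is between the loss incurred in summing over the wavelet indices and the gain in $j$; making this balance succeed with some positive rate is the core technical difficulty, and is precisely what fixes the value of $\delta$ in the statement.

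Once all the single-scale multiplier estimates are in place, the bilinear Plancherel identity combined with a Cauchy--Schwarz argument that exploits the Fourier localization of $K_j^i$ converts them into the desired $L^2\times L^2\to L^1$ bound. Summing over the spatial scales $i$ by almost-orthogonality and over the wavelet indices $\mu$ by the quantitative coefficient bounds yields the two estimates with constants linear in $\|\Omega\|_{L^\infty(S^{2d-1})}$ and with the asserted geometric decay rates $2^{-\delta j}$ for $j\geq 0$ and $2^{-(1-\delta)|j|}$ for $j<0$.
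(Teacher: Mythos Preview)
The paper does not give a proof of this proposition at all: it is quoted verbatim as Propositions 3 and 5 of \cite{BiRough} and used as a black box. So there is no ``paper's own proof'' to compare your sketch against; your proposal is in effect a summary of the argument from the cited source, not of anything appearing in this paper.

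That said, your outline is broadly faithful to the strategy in \cite{BiRough}: mean-zero and Taylor expansion give the easy gain for $j<0$, while the crucial $j\geq 0$ decay comes from a wavelet decomposition combined with multiplier-type estimates and almost-orthogonality. One detail to correct: in \cite{BiRough} the wavelet expansion is carried out in the ambient space $\R^{2d}$ (a tensor-product Meyer-type system applied to the multiplier), not as a wavelet basis on the sphere $S^{2d-1}$ as you wrote. The coefficient bounds come from the smoothness of $\Omega$ transferred through the Fourier transform rather than directly from a spherical wavelet expansion of $\Omega$. This is a technical point but matters if you intend to reconstruct the actual argument; as a high-level sketch your proposal is otherwise on target.
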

		
		\begin{prop} [Lem. 10 in \cite{BiRough}] \label{GrafCZ} Fix any $j \in \Z$ and $0< \eta < 1$. The kernel of $T_{j}$ is a bilinear Calder\'{o}n-Zygmund kernel that satisfies the $\eta$-smoothness condition \eqref{czSmooth} with constant $A_{j,\eta} \leq C_{d,\eta} \|\Omega \|_{\infty} 2^{|j|\eta}.$
		\end{prop}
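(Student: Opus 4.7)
The plan is to analyze the kernel piece-by-piece using the decomposition $K_j=\sum_{i\in\Z}K_j^i$ with $K_j^i=\phi_{j-i}\ast(\beta_iK)$, where $\phi_{j-i}(z)=2^{2d(j-i)}\phi(2^{j-i}z)$ is the convolution kernel of $\Delta_{j-i}$: a Schwartz function of mean zero concentrated at spatial scale $2^{i-j}$. The CZ size estimate for $K_j$ will hold uniformly in $j$, while the factor $2^{|j|\eta}$ will appear only in the smoothness bound, via interpolation between a size and a gradient estimate on each $K_j^i$.

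I would first establish the single-piece pointwise bound
\begin{equation*}
|K_j^i(z)|\lesssim_N\|\Omega\|_\infty\min\bigl(2^{-2di},\,2^{2d(j-i)}\bigr)\bigl(1+2^{j-i}\mathrm{dist}(z,A_i)\bigr)^{-N}
\end{equation*}
for every $N\geq 1$, where $A_i=\{2^{i-1}\le|w|\le 2^{i+1}\}$ is the support of $\beta_i$. The $2^{-2di}$ branch follows from pairing $\|\beta_iK\|_\infty\lesssim\|\Omega\|_\infty 2^{-2di}$ with $\|\phi_{j-i}\|_1\sim 1$, and the $2^{2d(j-i)}$ branch from $\|\beta_iK\|_1\lesssim\|\Omega\|_\infty$ with $\|\phi_{j-i}\|_\infty\sim 2^{2d(j-i)}$; the tail decay is the Schwartz decay of $\phi_{j-i}$. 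Summing this estimate over $i$ at a point with $|z|\sim 2^s$, the dominant contribution from $i=s$ yields $\|\Omega\|_\infty|z|^{-2d}$, and geometric decay in $i$ (choosing $N>2d$) controls the remaining terms, giving the bilinear CZ size estimate uniformly in $j$.

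For the $\eta$-Hölder condition, I would interpolate the above with the gradient bound coming from $\|\nabla\phi_{j-i}\|_1\sim 2^{j-i}$: the geometric mean with exponents $1-\eta$ and $\eta$ produces
\begin{equation*}
|K_j^i(z)-K_j^i(z')|\lesssim_N\|\Omega\|_\infty\min\bigl(2^{-2di},\,2^{2d(j-i)}\bigr)\bigl(2^{j-i}|z-z'|\bigr)^\eta\bigl(1+2^{j-i}\mathrm{dist}(z,A_i)\bigr)^{-N}
\end{equation*}
whenever $|z-z'|\le|z|/2$. Taking $|z|\sim 2^s$, the piece $i=s$ contributes $\|\Omega\|_\infty\,2^{\eta j}|z-z'|^\eta|z|^{-2d-\eta}$ when $j\geq 0$, exactly matching the claimed growth $2^{j\eta}$. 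For $j<0$ the second branch of the minimum dominates and gives $\|\Omega\|_\infty\,2^{(2d+\eta)j}|z-z'|^\eta|z|^{-2d-\eta}$, which is at most $\|\Omega\|_\infty\,2^{|j|\eta}|z-z'|^\eta|z|^{-2d-\eta}$ since $2^{(2d+\eta)j}\le 1\le 2^{|j|\eta}$. Summing the tails in $i$ with $N$ sufficiently large yields the $\eta$-smoothness \eqref{czSmooth} in each of the two coordinates of $K_j$, and smoothness in the diagonal ($x$) direction follows by the triangle inequality.

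The main technical obstacle is balancing the two scaling branches $2^{2d(j-i)}$ and $2^{-2di}$ against the Schwartz-decay exponent $N$ so that the geometric series in $i$ converges uniformly in $j$; any $N>2d+\eta+1$ suffices both in the size estimate and in the Hölder estimate. Once these two pieces are assembled, one reads off $A_{j,\eta}\lesssim_{d,\eta}\|\Omega\|_\infty 2^{|j|\eta}$ as claimed.
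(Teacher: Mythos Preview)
The paper does not give its own proof of this proposition: it is simply quoted as Lemma~10 of \cite{BiRough} and used as a black box. So there is no proof in the paper to compare against.

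Your sketch is essentially the standard argument for this type of estimate and is correct. The two key ingredients---the pointwise single-piece bound $|K_j^i(z)|\lesssim_N\|\Omega\|_\infty\min(2^{-2di},2^{2d(j-i)})(1+2^{j-i}\mathrm{dist}(z,A_i))^{-N}$ coming from $\|\beta_iK\|_\infty$, $\|\beta_iK\|_1$ and the Schwartz tails of $\phi_{j-i}$, and the matching gradient bound with an extra factor $2^{j-i}$---are exactly right, and interpolating them with weights $1-\eta,\eta$ produces the H\"older factor $(2^{j-i}|z-z'|)^\eta$. The case split $j\geq 0$ versus $j<0$ when selecting the branch of the minimum at $i=s$ is also handled correctly; note in particular that for $j<0$ and $i>s$ the Schwartz tail does not help (since $2^{j-i}\,\mathrm{dist}(z,A_i)\sim 2^{j}\leq 1$), but there the branch $2^{2d(j-i)}$ already gives geometric decay in $i$ on its own, so the sum still converges. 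Your remark that $N>2d+\eta+1$ suffices is adequate for closing all the geometric series uniformly in $j$. One small redundancy: since you prove the H\"older estimate for arbitrary increments $z-z'\in\R^{2d}$, smoothness of the associated trilinear kernel $\tilde K(u_1,u_2,x)=K_j(x-u_1,x-u_2)$ in each of $u_1,u_2,x$ follows directly, without a separate triangle-inequality step for the diagonal.
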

		
		\noindent By Proposition \ref{GrafCZ} we can apply Proposition \ref{cancLem} at each scale $j$, with resulting constant $A_{j,\eta} \leq C_{d,\eta} \|\Omega \|_{\infty} 2^{|j|\eta}.$ The $\eta$ can be arbitrarily small, so we will be able to interpolate with the bound from Proposition \ref{GrafThm} and then sum over $j$ to get the required estimates for the sparse-form bound of $T_{\Omega}$. 
		
		\subsection{Interpolation Lemmas}
		
		Assume below that a stopping collection $\mathcal{P}$ with top $Q$ has been fixed. The following lemmas allow us to interpolate bounds between various $\mathcal{X}_{\cdot}(\mathcal{P})$ and $\mathcal{Y}_{\cdot}(\mathcal{P})$ spaces in the trilinear setting. A more general interpolation theorem for these spaces should be available, but we only prove two particular results needed for the proof of Theorem \ref{MainThm}.  
		
		\begin{lemma}\label{interpolation} Fix any $0 < \epsilon < \frac{1}{8}$ and suppose $p = 1 + 4\epsilon$. Let $\Lambda$ be a (sub)-trilinear form such that $$|\Lambda(b,g,h)| \leq A_1 \|b\|_{\dot{\mathcal{X}_{1}}} \|g\|_{\mathcal{Y}_p} \|h\|_{\mathcal{Y}_p}$$ and $$ |\Lambda(b,g,h)| \leq A_2 \|b\|_{\dot{\mathcal{X}_{2}}} \|g\|_{\mathcal{Y}_2} \|h\|_{\mathcal{Y}_{\infty}}.$$  Then if $q = p + 4\epsilon$ and $q < r < \infty$, we have $$|\Lambda(f_1,f_2,f_3)| \leq (A_1)^{1-\epsilon}(A_2)^{\epsilon}\|f_1\|_{\dot{\mathcal{X}_{q}}} \|f_2\|_{\mathcal{Y}_q} \|f_3\|_{\mathcal{Y}_{r}}.$$
		\end{lemma}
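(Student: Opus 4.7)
\medskip

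\noindent\textbf{Proof plan.} The plan is to prove this by a simultaneous Calder\'on--Zygmund splitting of each of the three functions at a common height parameter $\lambda>0$, and then to apply hypothesis 1 or hypothesis 2 to each resulting piece according to its profile, optimizing over $\lambda$ at the end. This is a real-interpolation-style argument specialized to the trilinear form $\Lambda$ on the non-standard spaces $\dot{\mathcal{X}}_s$, $\mathcal{Y}_s$ built from the stopping collection $\mathcal{P}$.

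After normalizing $\|f_1\|_{\dot{\mathcal{X}}_q} = \|f_2\|_{\mathcal{Y}_q} = \|f_3\|_{\mathcal{Y}_r} = 1$, I would introduce a splitting parameter $\lambda>0$, to be tuned as a power of $A_2/A_1$. For the first slot, write $f_1 = \sum_{L\in \mathcal{P}} f_{1,L}$ (with $\int f_{1,L}=0$) and partition $\mathcal{P}$ into cubes $L$ where the localized maximal value $\inf_{x\in \hat L} M_q f_1(x)$ is $\leq \lambda$ versus $>\lambda$. This yields $f_1 = b^{s} + b^{b}$ with $\|b^{s}\|_{\dot{\mathcal{X}}_1}$ and $\|b^{b}\|_{\dot{\mathcal{X}}_2}$ each bounded by appropriate powers of $\lambda$, using $1<q<2$. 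For $f_2$ and $f_3$, I would perform analogous level-set decompositions $f_2 = g^s+g^b$ and $f_3 = h^s+h^b$ at level $\lambda$, so that $\|g^s\|_{\mathcal{Y}_p}, \|g^b\|_{\mathcal{Y}_2}$ and $\|h^s\|_{\mathcal{Y}_p}, \|h^b\|_{\mathcal{Y}_\infty}$ are each bounded by powers of $\lambda$. Here the gap $q-p = 4\epsilon>0$ provides the scale separation for $f_2$, and the freedom to take $r$ arbitrarily large above $q$ provides the corresponding room for $f_3$.

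I then expand $\Lambda(f_1,f_2,f_3)$ into the $2^3 = 8$ sub-forms induced by the three decompositions. To each sub-form I apply whichever of the two hypotheses is matched to its profile: hypothesis 1 controls those involving pieces that sit cleanly in $\dot{\mathcal{X}}_1\times \mathcal{Y}_p \times \mathcal{Y}_p$, while hypothesis 2 is used for the complementary combinations that sit in $\dot{\mathcal{X}}_2 \times \mathcal{Y}_2 \times \mathcal{Y}_\infty$. Summing the eight contributions produces an expression of the form $A_1 \lambda^{a} + A_2 \lambda^{-b}$ (schematically) for positive exponents $a,b$; optimizing in $\lambda$ then yields the geometric mean $A_1^{1-\epsilon}A_2^{\epsilon}$ with the exponent $\epsilon$ matching the interpolation parameter chosen.

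The main obstacle will be arranging the three decompositions so that both the small and the big pieces carry usable quantitative bounds in these non-standard spaces. In contrast to ordinary $L^p$, where a layer-cake formula gives a clean splitting, the $\dot{\mathcal{X}}_s$ and $\mathcal{Y}_s$ norms are defined via localized maximal functions attached to the stopping collection, and the $\dot{\mathcal{X}}_s$ decomposition must preserve the mean-zero property $\int b_L = 0$. One must choose thresholds carefully so that the resulting pieces still admit the CZ decomposition structure and interact correctly with the $\inf_{x \in \hat L} M_s$ quantities. Once the three decompositions are set up, the summation over the eight pieces and the optimization in $\lambda$ should be routine.
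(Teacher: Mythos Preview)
Your overall strategy---normalize, split each function at a height $\lambda$, expand into $2^3$ pieces, apply one of the two hypotheses to each, and optimize in $\lambda$---is exactly the approach the paper takes. The gap is in the decomposition you propose for $f_1$.

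Partitioning $\mathcal{P}$ by the size of $\inf_{x\in\hat L}M_q f_1(x)$ does not work. After normalizing $\|f_1\|_{\dot{\mathcal X}_q}=1$, that quantity is at most $1$ for \emph{every} $L\in\mathcal P$, so when $\lambda\ge 1$ (the relevant regime once you assume $A_2\le A_1$ and normalize $A_1=1$) the cube partition is trivial and gives no leverage between $\dot{\mathcal X}_1$ and $\dot{\mathcal X}_2$. The paper instead truncates the \emph{function}: with $f_{>\lambda}:=f\mathbf 1_{|f|>\lambda}$ it sets
\[
b_1=\sum_{R\in\mathcal P}\Bigl((f_1)_{>\lambda}-\frac{1}{|R|}\int_R(f_1)_{>\lambda}\Bigr)\mathbf 1_R,\qquad g_1=f_1-b_1.
\]
The mean subtraction keeps \emph{both} pieces in $\dot{\mathcal X}$, and Chebyshev-type estimates then give $\|b_1\|_{\dot{\mathcal X}_1}\lesssim\lambda^{1-q}$ and $\|g_1\|_{\dot{\mathcal X}_2}\lesssim\lambda^{1-q/2}$. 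For $f_2,f_3$ the paper uses the plain truncation $b_i=(f_i)_{>\lambda}$, $g_i=f_i-b_i$, which is what you call the level-set decomposition.

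One further point: the allocation of hypotheses is not symmetric. The paper applies hypothesis~2 to the \emph{single} term $\Lambda(g_1,g_2,g_3)$ (yielding $A_2\lambda^{3-q}$), and hypothesis~1 to \emph{all seven} remaining terms. This works precisely because $g_1\in\dot{\mathcal X}$, so hypothesis~1 is available whenever at least one ``bad'' piece appears and supplies the needed negative power of $\lambda$. Your description of ``complementary combinations that sit in $\dot{\mathcal X}_2\times\mathcal Y_2\times\mathcal Y_\infty$'' suggests multiple such terms, which would not close.
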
  
		
		\begin{proof} We can assume $A_2 < A_1.$ We also make the normalizations $A_1 = 1$ and $$\|f_1\|_{\dot{ \mathcal{X}_q }} = \|f_2\|_{\mathcal{Y}_q } = \|f_3\|_{ \mathcal{Y}_r} = 1.$$ It will now be enough to prove that $$|\Lambda(f_1,f_2,f_3)| \lesssim A_{2}^{\epsilon}.$$ Pick $\lambda \geq 1$ and define $f_{> \lambda} = f\textbf{1}_{|f| > \lambda}$. We decompose $f_1 = b_1 + g_1,$ where $$b_1 := \sum_{R \in \mathcal{P}} \left( (f_1)_{> \lambda} - \frac{1}{|R|}\int_{R}(f_1)_{>\lambda} \right)\textbf{1}_{R}.$$ For $i = 2,3$ we also decompose $f_i = b_i + g_i$ with $b_i := (f_i)_{> \lambda}.$  Using the normalization assumption and the definition of the $\mathcal{Y}$ spaces, it is straightforward to verify the following estimates: 
			
			\begin{align}\label{interpolationEst} &\|g_1\|_{\dot{\mathcal{X}_{p}}} \lesssim 1, \  \|g_1\|_{\dot{\mathcal{X}_{2}}} \lesssim \lambda^{1 - \frac{q}{2}}, \ \ \ \|b_1\|_{\dot{\mathcal{X}_1}} \lesssim \lambda^{1-q} \\ \nonumber &\|g_2\|_{\mathcal{Y}_{2}} \lesssim \lambda^{1 - \frac{q}{2}}, \ \ \ \|b_2\|_{\mathcal{Y}_p} \lesssim \lambda^{1-\frac{q}{p}} \\ \nonumber &\|g_3\|_{\mathcal{Y}_{\infty}} \lesssim \lambda, \ \ \ \|b_3\|_{\mathcal{Y}_p} \lesssim \lambda^{1 - \frac{r}{p}}.\end{align} 
			
			\noindent Note that $g_1 \in \dot{\mathcal{X}_{2}}$ since $f_1$ and $b_1$ are supported on cubes in $\mathcal{P}$, with mean zero on each cube. We now estimate $|\Lambda(f_1, f_2, f_3)|$ by the sum of eight terms \begin{align*} |\Lambda(g_1, g_2, g_3)| + |\Lambda(b_1, g_2, g_3)| + |\Lambda(g_1, b_2, g_3)| + |\Lambda(g_1, g_2, b_3)| \\ + \ |\Lambda(b_1, b_2, g_3)| + |\Lambda(b_1, g_2, b_3)| + |\Lambda(g_1, b_2, b_3)| + |\Lambda(b_1, b_2, b_3)|.
			\end{align*} Applying the estimates in \eqref{interpolationEst}, we get $$|\Lambda(g_1,g_2, g_3)| \lesssim A_2 \|g_1\|_{\dot{\mathcal{X}_{2}}} \|g_2\|_{\mathcal{Y}_2} \|g_3\|_{\mathcal{Y}_{\infty}} \lesssim A_2 \lambda^{3 - q}.$$ For the remaining seven terms, we use the first assumption $|\Lambda(b,g,h)| \leq \|b\|_{\dot{\mathcal{X}_{1}}} \|g\|_{\mathcal{Y}_p} \|h\|_{\mathcal{Y}_p},$ along with suitable estimates from \eqref{interpolationEst}. Here it is crucial that $g_1, b_1 \in \dot{\mathcal{X}_{2}}.$ Ultimately we conclude 
			
			\begin{align*}|\Lambda(f_1, f_2, f_3)| \lesssim \lambda^{1 - q} +& \lambda^{1 - \frac{q}{p}} + \lambda^{1 - \frac{r}{p}} + \lambda^{1- q}\lambda^{1 - \frac{q}{p}} + \lambda^{1- q}\lambda^{1 - \frac{r}{p}} + \lambda^{1 - \frac{q}{p}}\lambda^{1 - \frac{r}{p}} \\ &+ \lambda^{1- q}\lambda^{1 - \frac{q}{p}}\lambda^{1 - \frac{r}{p}} + A_2 \lambda^{3 - q}.\end{align*} 
			
			\noindent Let $\alpha = 1 - \frac{q}{p}$ and notice that $-4\epsilon < \alpha < -\epsilon$ and $\alpha = \max(1 - q, 1 - \frac{q}{p}, 1 - \frac{r}{p}).$ We can simplify the previous estimate to get \begin{align*}|\Lambda(f_1, f_2, f_3)| &\lesssim 3\lambda^{\alpha} + 3\lambda^{2 \alpha} + \lambda^{3 \alpha} + A_{2}\lambda^{3 - q} \\ &\lesssim \lambda^{\alpha}\left(7 + A_2 \lambda^{3 - q - \alpha }  \right) \\ & \lesssim \lambda^{-\epsilon}\left(7 + A_2 \lambda^{3 - q + 4\epsilon} \right). \end{align*} We set $\lambda = A_2^{-3 + q - 4\epsilon}$. Since $A_2 < 1$ and $q < 2$ this is admissible and we conclude that $$ |\Lambda(f_1, f_2, f_3)| \lesssim A_{2}^{(-3 + q - 4\epsilon)(-\epsilon)} \lesssim A_{2}^{3\epsilon + 4\epsilon^{2} - q\epsilon} \lesssim A_{2}^{\epsilon}.$$ This completes the proof. \end{proof}
		
		\begin{lemma}\label{interpolation2} Fix any $0 < \epsilon < \frac{1}{8}$ and set $p = 1 + 4\epsilon$. Let $\Lambda$ be a (sub)-trilinear form such that $$|\Lambda(g,h,b)| \leq A_1  \|g\|_{\mathcal{Y}_p} \|h\|_{\mathcal{Y}_p}\|b\|_{\dot{\mathcal{X}_{1}}}$$ and $$ |\Lambda(g,h,b)| \leq A_2 \|g\|_{\mathcal{Y}_2} \|h\|_{\mathcal{Y}_2} \|b\|_{\mathcal{Y}_{\infty}}.$$  Then if $q = p + 4\epsilon$ and $q < r < \infty$, we have $$|\Lambda(f_1,f_2,f_3)| \leq (A_1)^{1-\epsilon}(A_2)^{\epsilon}\|f_1\|_{\mathcal{Y}_{q}} \|f_2\|_{\mathcal{Y}_q} \|f_3\|_{\dot{\mathcal{X}_{r}}}.$$ \end{lemma}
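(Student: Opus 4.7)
My plan is to mirror the proof of Lemma \ref{interpolation}, but with the distinguished $\dot{\mathcal{X}}$-factor moved from the first slot to the third. After the normalization $A_2 \leq A_1 = 1$ and $\|f_1\|_{\mathcal{Y}_q} = \|f_2\|_{\mathcal{Y}_q} = \|f_3\|_{\dot{\mathcal{X}}_r} = 1$, it suffices to show $|\Lambda(f_1,f_2,f_3)| \lesssim A_2^\epsilon$. Fix a level $\lambda \geq 1$ to be optimized. For $i=1,2$ perform the usual pointwise split $f_i = g_i + b_i$ with $b_i := (f_i)_{>\lambda}$. For $f_3 = \sum_{R \in \mathcal{P}} b_R^{(3)}$, with each $b_R^{(3)}$ supported in $R$ and mean zero, set instead
\[
b_3 := \sum_{R \in \mathcal{P}} \left( (b_R^{(3)})_{>\lambda} - \frac{1}{|R|}\int_R (b_R^{(3)})_{>\lambda} \right)\textbf{1}_R, \qquad g_3 := f_3 - b_3,
\]
so that $g_3$ and $b_3$ are both supported on cubes of $\mathcal{P}$ with mean zero on each, hence both sit inside the $\dot{\mathcal{X}}$-scale.

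The next step is to verify the analog of the estimates \eqref{interpolationEst}. For $i=1,2$ the same computation as in Lemma \ref{interpolation} yields $\|g_i\|_{\mathcal{Y}_p} \lesssim 1$, $\|g_i\|_{\mathcal{Y}_2} \lesssim \lambda^{1 - q/2}$, and $\|b_i\|_{\mathcal{Y}_p} \lesssim \lambda^{1 - q/p}$. For the third factor, the key input is that $\|f_3\|_{\mathcal{Y}_r} = 1$ furnishes some $x_0 \in \hat{R}$ with $M_r(f_3)(x_0) \lesssim 1$; since $R \subset \hat{R}$, a comparison with a common containing cube of size $\sim |\hat{R}|$ bounds the $r$-average of $b_R^{(3)}$ on $R$ by a dimensional constant. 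A Chebyshev-type estimate then delivers $\|b_3\|_{\dot{\mathcal{X}}_1} \lesssim \lambda^{1-r}$, while a pointwise estimate on each $R$ (the surviving sublevel piece is $\leq \lambda$ and the subtracted averages are $\lesssim 1 \leq \lambda$) yields $\|g_3\|_{\mathcal{Y}_\infty} \lesssim \lambda$ and $\|g_3\|_{\dot{\mathcal{X}}_p} \lesssim 1$.

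Now expand $\Lambda(f_1,f_2,f_3)$ into the eight pieces coming from $f_i = g_i + b_i$. Apply the second hypothesis (with constant $A_2$) only to the all-$g$ piece, producing the contribution $A_2 \|g_1\|_{\mathcal{Y}_2}\|g_2\|_{\mathcal{Y}_2}\|g_3\|_{\mathcal{Y}_\infty} \lesssim A_2 \lambda^{3-q}$. Bound the remaining seven pieces by the first hypothesis, placing the $f_3$-factor ($g_3$ or $b_3$) in the $\dot{\mathcal{X}}_1$-slot; since $\mathcal{Y}_p$-norms (hence $\dot{\mathcal{X}}_p$-norms) are increasing in $p$, we have $\|g_3\|_{\dot{\mathcal{X}}_1} \leq \|g_3\|_{\dot{\mathcal{X}}_p} \lesssim 1$ and $\|b_3\|_{\dot{\mathcal{X}}_1} \lesssim \lambda^{1-r}$. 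With $\alpha := 1 - q/p \in (-4\epsilon, -\epsilon)$, which dominates both $1-q$ and $1-r$ in the range $\lambda \geq 1$, the sum collapses to
\[
|\Lambda(f_1,f_2,f_3)| \lesssim \lambda^\alpha + \lambda^{2\alpha} + \lambda^{3\alpha} + A_2 \lambda^{3-q} \lesssim \lambda^{-\epsilon}\bigl(1 + A_2 \lambda^{3-q+4\epsilon}\bigr).
\]
Choosing $\lambda = A_2^{-3+q-4\epsilon}$ exactly as in Lemma \ref{interpolation} (admissible since $A_2 \leq 1$ and $q < 2$) produces the desired bound.

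The only genuinely new ingredient relative to Lemma \ref{interpolation} is the Calder\'{o}n--Zygmund-style decomposition of $f_3 \in \dot{\mathcal{X}}_r$: one must ensure simultaneously that $b_3$ and $g_3$ are mean-zero on each cube (so both remain in $\dot{\mathcal{X}}$) \emph{and} that the subtracted cube averages $|R|^{-1}\int_R (b_R^{(3)})_{>\lambda}$ do not spoil the $\|g_3\|_{\mathcal{Y}_\infty}$ bound. This latter point is the main obstacle, and is resolved using the observation that $\mathcal{Y}_r$-membership controls cube averages on the stopping cubes, thanks to the $2^5$-fold dilate $\hat{R}$ built into the definition of $\|\cdot\|_{\mathcal{Y}_r}$. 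Once this decomposition is in place, the remainder is a routine adaptation of the computation in Lemma \ref{interpolation}.
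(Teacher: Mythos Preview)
Your proposal is correct and follows essentially the same approach as the paper: the paper's proof is just a two-sentence sketch indicating precisely the decomposition you wrote down, namely $b_3 = \sum_{R\in\mathcal{P}}\bigl((f_3)_{>\lambda} - |R|^{-1}\int_R (f_3)_{>\lambda}\bigr)\textbf{1}_R$ for the third factor and the plain level-set split for $f_1,f_2$, then proceeding exactly as in Lemma~\ref{interpolation}. You have simply filled in the details the paper omits (in particular the control of the subtracted averages via the $\hat{R}$-dilate in the $\mathcal{Y}_r$ norm), and your use of $\|b_3\|_{\dot{\mathcal{X}}_1}\lesssim\lambda^{1-r}$ in place of a $\dot{\mathcal{X}}_p$ bound is a harmless variant since $1-r\le 1-r/p\le\alpha$.
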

		
		\begin{proof} The argument is almost identical to the proof of Lemma \ref{interpolation}. In this case we decompose $f_3 = g_3 + b_3$ with $$b_3 = \sum_{R \in \mathcal{P}} \left((f_3)_{>\lambda} - \frac{1}{|R|}\int_{R} (f_3)_{> \lambda} \right)\textbf{1}_R.$$  For $i = 1,2$ we let $f_i = g_i + b_i$ with $g_i = (f_i)_{\leq \lambda},$ and then proceed as in Lemma \ref{interpolation}. \end{proof}
		
		\subsection{The Sparse Bound}
		
		\noindent We can now prove Theorem \ref{MainThm}. Let $T_{\Omega}$ be a rough bilinear operator defined as above, with $\Omega \in L^{\infty}(S^{2d-1})$. Such operators $T_{\Omega}$ satisfy the standard Calder\'{o}n-Zygmund size estimate, so the single-scale properties \textbf{(S)} hold. The proof that $C_{T_{\Omega}} < \infty$ for some tuple of exponents is also straightforward. In particular, inspection of the proofs of Propositions \ref{GrafThm} and \ref{GrafCZ} in \cite{BiRough} shows that the same estimates hold for $T_j$ if one replaces the kernel of $T_j$ by $$\widetilde{K}_{j} := \sum_{i = -\infty}^{\infty} a_i \cdot K_{j}^{i},$$ with $a_i \in \{0, 1\}$ but otherwise arbitrary. Moreover, these estimates are uniform over the choice of $a_i$. Any truncation of the kernel of $T$ leads to a special case of this small modification, and therefore the $L^{2}\times L^{2} \rightarrow L^{1}$ estimate of Proposition \ref{GrafThm} still holds. Likewise, the Calder\'{o}n-Zygmund smoothness estimate from Proposition \ref{GrafCZ} holds uniformly over the choice of $a_i$. This is once again obvious from the proof in \cite{BiRough}. It follows that $C_{T_{\Omega}}(2,2,1) < \infty$. We must now verify the estimates \eqref{assumptionL} for $(q_1,q_2,q_3) = (r,r,r)$, where $r > 1.$

		Fix $0 < \eta < 1$. Since $T^{j}$ is a bilinear Calder\'{o}n-Zygmund kernel satisfying the $\eta$-smoothness condition \eqref{czSmooth} with constant $A \leq C_{d,\eta} \|\Omega \|_{\infty} 2^{|j|\eta},$ we see from Proposition \ref{cancLem} that $$|\Lambda^{j}(b, g, h)| \leq C_{d,\eta,p} \|\Omega \|_{\infty} 2^{|j|\eta} |Q| \|b\|_{\dot{\mathcal{X}}_{1}} \|g\|_{\mathcal{Y}_p}\|h\|_{\mathcal{Y}_p}$$ for any $1 < p \leq \infty$. From Proposition \ref{GrafThm} we also have $$|\Lambda^{j}(b, g,h)| \lesssim 2^{-c|j|}\|\Omega\|_{\infty}|Q|\|b\|_{\dot{\mathcal{X}}_{2}} \|g\|_{\mathcal{Y}_2}\|h\|_{\mathcal{Y}_{\infty}},$$ where $c$ is some positive constant independent of $j$. Interpolating via Lemma \ref{interpolation}, we find that for any $0 < \epsilon < \frac{1}{8}$ there are $1 < q < r$ so that \begin{align*} |\Lambda^{j}(b,g,h)| &\lesssim_{\epsilon,\eta} (2^{\eta|j|})^{1-\epsilon}(2^{-c|j|})^{\epsilon}|Q|\|\Omega\|_{\infty}\|b\|_{\dot{\mathcal{X}_{q}}} \|g\|_{\mathcal{Y}_q} \|h\|_{\mathcal{Y}_{r}} \\ &\lesssim_{\epsilon, \eta} 2^{\eta |j|} 2^{-\eta \epsilon |j|}2^{-c\epsilon |j|} |Q|\|\Omega\|_{\infty}\|b\|_{\dot{\mathcal{X}_{q}}} \|g\|_{\mathcal{Y}_q} \|h\|_{\mathcal{Y}_{r}}.\end{align*} Suppose we have chosen $\eta$ so that $\eta < \frac{c}{8}$. If we let $\epsilon = \frac{\eta}{c}$ then $0 < \epsilon < \frac{1}{8}$, and therefore $$ |\Lambda^{j}(b,g,h)| \lesssim_{\epsilon, \eta} 2^{-\eta \epsilon |j|} |Q|\|\Omega\|_{\infty}\|b\|_{\dot{\mathcal{X}_{q}}} \|g\|_{\mathcal{Y}_q} \|h\|_{\mathcal{Y}_{r}}.$$ This is summable over $j \in \Z$. Hence if $\Lambda_{\mathcal{P}}$ is the form associated to $T_{\Omega}$, truncated to some finite number of scales, we can conclude that \begin{equation}\label{RoughEst1}|\Lambda_{\mathcal{P}}(b,g,h)| \lesssim |Q|\|\Omega\|_{\infty}\|b\|_{\dot{\mathcal{X}_{q}}} \|g\|_{\mathcal{Y}_q} \|h\|_{\mathcal{Y}_{r}}. \end{equation}By symmetry, the same argument also yields \begin{equation}\label{RoughEst2}|\Lambda_{\mathcal{P}}(g,b,h)| \lesssim |Q|\|\Omega\|_{\infty}\|g\|_{\mathcal{Y}_q} \|b\|_{\dot{\mathcal{X}_{q}}}  \|h\|_{\mathcal{Y}_{r}}.\end{equation} These estimates are uniform over all finite truncations and stopping collections.  Finally, we argue as above and interpolate using Lemma \ref{interpolation2} to prove \begin{equation}\label{RoughEst3}|\Lambda_{\mathcal{P}}(g,h,b)| \lesssim |Q|\|\Omega\|_{\infty} \|g\|_{\mathcal{Y}_q} \|h\|_{\mathcal{Y}_{q}}\|b\|_{\dot{\mathcal{X}_{r}}},\end{equation} which is once again uniform over truncations and stopping collections.  
		
		The estimates \eqref{RoughEst1}, \eqref{RoughEst2}, \eqref{RoughEst3} show that the form associated to a rough bilinear operator $T_{\Omega}$ satisfies assumption \eqref{assumptionL} of Theorem \ref{abstractThm} with tuple $(r,r,r)$, since the $\mathcal{Y}_{q}$ norm is increasing in $q$. It is clear that we can take any $r > 1$, so this completes the proof of Theorem \ref{MainThm}. 
		
		\section{Weighted Estimates}
		
		We now prove the weighted estimates claimed in Corollary \ref{WeightedEst} and Corollary \ref{MultiWeight}. We assume that the reader is familiar with basic results from the theory of $A_p$ weights, for example the openness property \begin{equation}\label{open}[w]_{A_{p - \eta}} \lesssim [w]_{A_p} \ \text{  when } \eta = c_{d,p}[w]_{A_p}^{1-p'}\end{equation} (see \cite{CFA} or \cite{Stein} for a proof). Below we always assume that $\Omega \in L^{\infty}(S^{2d-1})$ with mean zero. 
		
		\subsection{Single Weight - Proof of Corollary \ref{WeightedEst}}
		
		The argument is a combination of known results from the sparse domination theory. It suffices to prove the desired estimate for fixed $p > 2$. We can then apply the multilinear extrapolation theory due to Grafakos and Martell, in particular Theorem 2 in \cite{GrMart}. Fix $2 < p < \infty$ and $w \in A_p,$ with the goal of showing that $$\|T_{\Omega}(f,g)\|_{L^{p/2}(w)} \leq C_{p,\Omega} c_w \|f\|_{L^{p}(w)} \|g\|_{L^{p}(w)}.$$ Define $$\sigma = w^{-\frac{2}{p-2}}$$ and notice that $$w^{\frac{2}{p}}\sigma^{\frac{p-2}{p}} = 1.$$ Let $r = (p/2)' = \frac{p}{p-2}$, and choose  $q_i = 1+\epsilon_{i}$ for $\epsilon_{i}>0$, such that $q_i < r$ and $q_i < p$. By Theorem \ref{MainThm} and duality considerations, it will be enough to show that for any sparse collection $\mathcal{S}$, \begin{equation}\label{weightEst1}\text{PSF}_{\mathcal{S}}^{(q_1,q_2,q_3)}(f,g,h) \lesssim_{p} c_w \|f\|_{L^{p}(w)}\|g\|_{L^{p}(w)}\|h\|_{L^{r}(\sigma)}.\end{equation} We begin by quoting the estimate $$\text{PSF}_{\mathcal{S}}^{(1,1,q_3)}(g_1, g_2, g_3) \lesssim_{p} \gamma_w^{\max(\frac{p}{p-1} , \frac{p}{2} )}\|g_1\|_{L^{p}(w)}\|g_2\|_{L^{p}(w)}\|g_3\|_{L^{r}(\sigma)}$$ with $$\gamma_w = \sup_{Q}\left(\frac{1}{|Q|}\int_{Q}w^{-\frac{1}{p-1}} \right)^{2-\frac{2}{p}}\left(\frac{1}{|Q|}\int_{Q}\sigma^{\frac{q_{3}}{q_{3}-r}} \right)^{\frac{1}{q_3}-\frac{1}{r}}.$$ This can be proved using techniques from \cite{MultiSparse} or \cite{DyCalc} (see, for example, Theorem 3 in \cite{MultiSparse} and its proof). Now write the term involving $\sigma$ as $$\left(\frac{1}{|Q|}\int_{Q}w^{1+\alpha} \right)^{\frac{1}{q_3} - \frac{1}{r}}, \text{      } 1+\alpha = \frac{1- \frac{1}{r}}{ \frac{1}{q_3}  - \frac{1}{r}}.$$  We can apply the reverse H\"{o}lder inequality if we choose $\epsilon_3$ correctly (as we are free to do), and after some straightforward calculation this leads to the estimate $$\gamma_{w} \lesssim_{p}c_{w,p}[w]_{A_{p}}^{\frac{2}{p}},$$ where $c_{w,p}$ is a power of the constant appearing in the reverse H\"{o}lder inequality. Hence
		
		\begin{equation} \label{PSFInit}\text{PSF}_{\mathcal{S}}^{(1,1,q_3)}(g_1, g_2, g_3) \lesssim_{p}c_{w,p}[w]_{A_{p}}^{\frac{2}{p} \cdot \max(\frac{p}{p-1} , \frac{p}{2})}\|g_1\|_{L^{p}(w)}\|g_2\|_{L^{p}(w)}\|g_3\|_{L^{r}(\sigma)},\end{equation} 
		and $c_{w,p}$ can be computed explicitly in terms of $[w]_{A_p}$ (see, for example, Chapter 7.2 in \cite{CFA}). We can lift \eqref{PSFInit} to a bound for $\text{PSF}_{\mathcal{S}}^{q_1,q_2,q_3}(f,g,h)$ using an inequality due to Di Plinio and Lerner (\cite{DPLerner}, Proposition 4.1): \begin{equation}\label{DPLerner} (f)_{1+\epsilon, Q} \leq (f)_{1,Q} + 2^{d}\epsilon(M_{1+\epsilon}f)_{1,Q}. \end{equation} Recall also that $$\|M_{q}\|_{L^{p}(w) \rightarrow L^{p}(w) } \lesssim [w]_{A_{\frac{p}{q}}}^{\frac{q}{p-q}}$$ when $p > q$ (see \cite{Buck}). Using the openness of the $A_{t}$ classes, we see that if $\epsilon_1,\epsilon_2$ are chosen properly then in fact 
		\begin{equation} \label{maxWeight} \|M_{q_i}\|_{L^{p}(w) \rightarrow L^{p}(w) } \lesssim [w]_{A_{p}}^{\left(\frac{q_i}{p-q_i}\right)\left( \frac{1}{1-p} \right) }\end{equation} for $i = 1,2$. Now apply \eqref{DPLerner} to the $q_1$- and $q_2$-averages occurring in the form $\text{PSF}^{(q_1,q_2,q_3)}_{\mathcal{S}}(f,g,h)$ to get \begin{align*}\text{PSF}^{(q_1,q_2,q_3)}(f,g,h) \lesssim \text{PSF}&^{(1,1,q_3)}(f,g,h) + c_w'\text{PSF}^{(1,1,q_3)}(f, M_{1+\epsilon_2}g, h) \\ &+ \  c_w'\text{PSF}^{(1,1,q_3)}(M_{1+\epsilon_1}f, g, h) \\ &+ c_w'\text{PSF}^{(1,1,q_3)}(M_{1+\epsilon_1} f, M_{1+\epsilon_2}g, h), \end{align*} with $c_w'$ appearing from \eqref{DPLerner} and the openness estimate. Then \eqref{PSFInit} and \eqref{maxWeight} imply the claimed boundedness. If $\alpha_{w}$ is the constant appearing in \eqref{PSFInit}, we see as a consequence that $T_{\Omega}:L^{p}(w) \times L^{p}(w) \rightarrow L^{p/2}(w)$ with constant $$C_{w} \lesssim (\alpha_{w}+c_w')\cdot[w]_{A_{p}}^{\left(\frac{q_1}{p-q_1}\right)\left( \frac{1}{1-p} \right) }[w]_{A_{p}}^{\left(\frac{q_2}{p-q_2}\right)\left( \frac{1}{1-p} \right) }.$$ Note that $c_{w}'$ can also be computed explicitly in terms of $[w]_{A_p}$, after solving for the $\epsilon_1, \epsilon_2$ used above and using \eqref{open}.
		
		\subsection{Multiple Weights} We prove Corollaries \ref{MultiWeight} and \ref{MultiWeight2}. Suppose $v_1,v_2,v_3$ are strictly positive functions such that $$\prod_{i=1}^{3} v_{i}^{\frac{1}{q_i}} =1$$ for some $q_i \geq 1$ with $\sum_{i=1}^{3}\frac{1}{q_i} = 1$. Let $\vec{p} = (p_1,p_2,p_3)$ be any tuple of exponents with $1 < p_i < q_i$, and recall that $$[\vec{v}]_{A_{\vec{q}}^{\vec{p}}} = \sup_{Q}\prod_{i=1}^{3} \left(\frac{1}{|Q|}\int_{Q}v_{i}^{ \frac{p_i}{p_i - q_i} }\right)^{\frac{1}{p_i} - \frac{1}{q_i}},$$ with the supremum taken over cubes $Q \subset \R^{d}$. We can argue as in the proof of Lemma 6.1 in \cite{MultiSparse} to prove that for any sparse collection $\mathcal{S}$ we have $$\text{PSF}_{\mathcal{S}}^{\vec{p}}(f_1,f_2,f_3) \leq \left(c_{\vec{p}, \vec{q}} [\vec{v}]_{A_{\vec{q}}^{\vec{p}}}^{\max \left\{ \frac{q_i}{q_i-p_i} \right\}} \right) \prod_{i=1}^{3}\|f_i\|_{L^{q_i}(v_i)}$$ when $[\vec{v}]_{A^{\vec{p}}_{\vec{q}}}$ is finite. Applying Theorem \ref{MainThm} yields the following result. 
		
		\begin{lemma}\label{MultiWeightLem} Let $v_1,v_2,v_3$ and $\vec{p},\vec{q}$ be as above. Also assume $[\vec{v}]_{A_{\vec{q}}^{\vec{p}}} < \infty$. Then $$|\langle T_{\Omega}(f_1,f_2), f_3\rangle | \leq C_{\Omega, \vec{p}, \vec{q}} [\vec{v}]_{A_{\vec{q}}^{\vec{p}}}^{\max \left\{ \frac{q_i}{q_i-p_i} \right\}}  \prod_{i=1}^{3}\|f_i\|_{L^{q_i}(v_i)}$$ for $f_i \in L^{q_i}(v_i)$. \end{lemma}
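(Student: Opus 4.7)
The plan is to deduce the lemma directly from Theorem \ref{MainThm} combined with the weighted sparse-form inequality stated in the paragraph immediately preceding the lemma. First I would fix an auxiliary exponent $p_0$ with $1 < p_0 \leq \min\{p_1, p_2, p_3\}$, which is available because each $p_i > 1$. Theorem \ref{MainThm} applied with this $p_0$ gives
\[
|\langle T_{\Omega}(f_1,f_2), f_3\rangle| \leq C_{p_0} \|\Omega\|_{L^{\infty}(S^{2d-1})} \sup_{\mathcal{S}} \text{PSF}_{\mathcal{S}}^{(p_0,p_0,p_0)}(f_1,f_2,f_3),
\]
where $\mathcal{S}$ ranges over sparse collections with the fixed sparsity constant supplied by that theorem.

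Next I would use the elementary H\"{o}lder-based monotonicity $(f)_{r, Q} \leq (f)_{s, Q}$ for $r \leq s$, applied coordinatewise, to conclude $\text{PSF}_{\mathcal{S}}^{(p_0,p_0,p_0)}(f_1,f_2,f_3) \leq \text{PSF}_{\mathcal{S}}^{\vec{p}}(f_1,f_2,f_3)$ for every sparse $\mathcal{S}$. Then I would invoke the weighted sparse-form bound
\[
\sup_{\mathcal{S}} \text{PSF}_{\mathcal{S}}^{\vec{p}}(f_1,f_2,f_3) \leq c_{\vec{p}, \vec{q}} \, [\vec{v}]_{A_{\vec{q}}^{\vec{p}}}^{\max\{q_i/(q_i - p_i)\}} \prod_{i=1}^{3} \|f_i\|_{L^{q_i}(v_i)}
\]
recorded just before the lemma. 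Chaining these three inequalities produces the claim, with the constant $C_{\Omega, \vec{p}, \vec{q}} = C_{p_0} \|\Omega\|_{\infty} c_{\vec{p}, \vec{q}}$.

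The real content, and hence the main obstacle in any self-contained argument, lies in the third step, which the paper attributes to the proof of Lemma 6.1 in \cite{MultiSparse}. The strategy there is to exploit the factorization $\prod_{i=1}^{3} v_i^{1/q_i} = 1$ to distribute the weights across the cubes of the sparse collection, apply H\"{o}lder on each cube to split each factor $(f_i)_{p_i, Q}$ against the measures $v_i \, dx$ and $v_i^{p_i/(p_i - q_i)} \, dx$, and then sum using the almost-disjointness of the sets $\{E_Q\}_{Q \in \mathcal{S}}$ together with the definition of $[\vec{v}]_{A_{\vec{q}}^{\vec{p}}}$ and a weighted maximal function bound. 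The exponent $\max\{q_i/(q_i - p_i)\}$ arises sharply as the worst-case exponent of the weighted maximal inequality applied in the least favorable coordinate; the first two steps are purely formal once Theorem \ref{MainThm} is in hand.
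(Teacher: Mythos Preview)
Your proposal is correct and follows essentially the same route as the paper: the paper simply writes ``Applying Theorem \ref{MainThm} yields the following result'' after recording the weighted sparse-form inequality from \cite{MultiSparse}, and your argument is precisely this chain. The only addition is that you make explicit the monotonicity step $(p_0,p_0,p_0)\to \vec{p}$ needed because Theorem \ref{MainThm} is stated only for equal exponents, a detail the paper leaves implicit.
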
 
		
		\noindent The reader can consult \cite{MultiSparse} for the explicit value of the constant. Corollary \ref{MultiWeight2} immediately follows from this lemma by duality. We now show that Corollary \ref{MultiWeight} is also a consequence.  
		
		By using multilinear extrapolation techniques from \cite{Duo2}, \cite{GrMart}, it is enough to prove the claimed estimate of Corollary \ref{MultiWeight} when $q_i = 3$ for each $i=1,2,3$. Write $\vec{3} = (3,3,3)$, and suppose $\vec{p} = (1+\epsilon, 1+\epsilon, 1+\epsilon)$ for some small $\epsilon > 0$. Observe that if $t >0$ is chosen properly, then there is some absolute $C >0$ such that \begin{equation} \label{RHest} [v_i^{1+t}]_{A_3} \leq C[v_i]_{A_3}, \ \ \text{    } i=1,2.\end{equation}  This estimate can be proved using the openness property and reverse H\"{o}lder estimates as in the last section; see \cite{CMP}, Section 3.7, for a more general version of this inequality. Now Lemma \ref{MultiWeightLem} implies that $$ T_{\Omega}: L^{3}(v_1) \times L^{3}(v_2) \rightarrow L^{3/2}(v_{3}^{-1/2})$$ with operator norm bounded by $C_{\epsilon}[\vec{v}]^{3}_{A_{\vec{3}}^{\vec{p} }}.$ Hence it will be enough to prove \begin{equation}\label{RHest2}[\vec{v}]_{A_{\vec{3}}^{\vec{p} }} \leq [v_1^{1+t}]_{A_3}^{\frac{1}{3(1+t)}} \cdot [v_2^{1+t}]_{A_3}^{\frac{1}{3(1+t)}}\end{equation} with $1 +t = \frac{2(1+\epsilon)}{2 - \epsilon}$, since we can then apply \eqref{RHest} after choosing $\epsilon$ correctly (as we are free to do). One can prove \eqref{RHest2} by using H\"{o}lder's inequality; we omit the details.

		\section{Proof of Theorem \ref{abstractThm}} Here we prove our multilinear sparse domination theorem, and also resolve the technical issue related to multiplication operators discussed in Remarks 2.1 and 2.3 above. 
		
		\subsection{Construction of the Sparse Collection} Suppose we are given a form $\Lambda$ as in the statement of Theorem \ref{abstractThm} with $C_T = C_T(r_1,...,r_m, \alpha)$, and $\Lambda$ truncated to a finite (but otherwise arbitrary) number of scales. Given $f_{i} \in L^{p_{i}}(\R^{d})$ with compact support, we would like to construct a sparse collection of cubes $\mathcal{S}$ so that $$|\Lambda(f_1,...,f_{m+1})| \leq c_{d}\left[C_L + C_T\right]\sum_{R \in \mathcal{S}} |R|\prod_{i=1}^{m+1} (f_i)_{p_i, R}.$$ We will generalize the iterative argument from \cite{roughSparse}. The following lemma is crucial for the induction step.

		\begin{lemma}\label{sparseInit} Let $Q$ be a fixed dyadic cube and $\mathcal{P}$ a stopping collection with top $Q$. If the estimates \eqref{assumptionL} hold, then \begin{align*}|\Lambda^{s_{Q}}(h_1\textbf{1}_{Q}, h_2 \textbf{1}_{3Q}, ..., h_{m+1}\textbf{1}_{3Q})| \leq & \ C_{d}|Q|\|h_1\|_{\mathcal{Y}_{p_1}} \|h_2\|_{\mathcal{Y}_{p_2}} ... \|h_{m+1}\|_{\mathcal{Y}_{p_{m+1}}} \\ & \ \ \ + \sum_{\substack{L \in \mathcal{P}\\ L \subset Q }}|\Lambda^{s_{L}}(h_1\textbf{1}_{L}, h_2 \textbf{1}_{3L},..., h_{m+1}\textbf{1}_{3L})|.\end{align*} \end{lemma}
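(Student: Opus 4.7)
The plan is to unfold definition \eqref{formDef}, which gives the identity
\[
\Lambda^{s_Q}(h_1\mathbf{1}_Q, h_2\mathbf{1}_{3Q}, \ldots, h_{m+1}\mathbf{1}_{3Q}) = \Lambda_{\mathcal{P}}(h_1,\ldots,h_{m+1}) + \sum_{\substack{L \in \mathcal{P} \\ L \subset Q}} \Lambda^{s_L}(h_1\mathbf{1}_L, \ldots, h_{m+1}\mathbf{1}_{3L}),
\]
so that after the triangle inequality the task reduces to proving the single sparse-style estimate $|\Lambda_{\mathcal{P}}(h_1,\ldots,h_{m+1})| \leq C_d |Q| \prod_i \|h_i\|_{\mathcal{Y}_{p_i}}$ with $C_d$ of the form $c_{d,m}(C_T + C_L)$.

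For this reduced statement I would apply the Calder\'{o}n-Zygmund decomposition \eqref{CZdecomp} to each $h_i$, writing $h_i = g_i + b_i$ with $\|g_i\|_{\mathcal{Y}_\infty} \lesssim \|h_i\|_{\mathcal{Y}_{p_i}}$ and $\|b_i\|_{\dot{\mathcal{X}}_{p_i}} \lesssim \|h_i\|_{\mathcal{Y}_{p_i}}$. Multilinearity then expresses $\Lambda_{\mathcal{P}}(h_1,\ldots,h_{m+1})$ as a sum of $2^{m+1}$ terms indexed by choices $f_i \in \{g_i, b_i\}$. I would handle the $2^{m+1} - 1$ mixed terms by grouping according to the smallest index $j$ at which $f_j = b_j$: for such a term, $f_i = g_i \in \mathcal{Y}_\infty$ for $i < j$, $f_j = b_j \in \dot{\mathcal{X}}_{p_j}$, and $f_i$ lies in $\mathcal{Y}_{p_i}$ for $i > j$ (both $g_i$ and $b_i$ do, since $\dot{\mathcal{X}}_{p_i}$ embeds continuously in $\mathcal{Y}_{p_i}$). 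These are exactly the hypotheses of the $j$-th inequality in \eqref{assumptionL}, which yields a contribution bounded by $C_L |Q| \prod_i \|h_i\|_{\mathcal{Y}_{p_i}}$.

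The remaining ``all-good'' term $\Lambda_{\mathcal{P}}(g_1,\ldots,g_{m+1})$ is not directly covered by \eqref{assumptionL} and requires a separate argument using the uniform truncation constant $C_T$. I would expand $\Lambda_{\mathcal{P}}(g_1,\ldots,g_{m+1}) = \Lambda_Q(g_1,\ldots,g_{m+1}) - \sum_L \Lambda_L(g_1,\ldots,g_{m+1})$ and estimate each piece by H\"{o}lder and the $L^{r_1}\times\cdots\times L^{r_m} \to L^\alpha$ bound: for each $R \in \{Q\} \cup \{L : L \subset Q\}$, using $\|g_i \mathbf{1}_{3R}\|_{r_i} \leq \|g_i\|_\infty |3R|^{1/r_i}$ together with the exponent relation $\sum_i 1/r_i + 1/\alpha' = 1$ collapses the volume factors into $c_d|R|$, giving a contribution $\lesssim C_T |R| \prod_i \|g_i\|_\infty$. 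Disjointness of $\mathcal{P}$ then yields $\sum_L |L| \leq |Q|$, and the $\mathcal{Y}_\infty$ control of each $g_i$ converts the whole bound into $\lesssim C_T |Q| \prod_i \|h_i\|_{\mathcal{Y}_{p_i}}$.

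The main technical point to verify is the bookkeeping in the mixed-term step: one must confirm that the ``first-$b$ index $j$'' grouping places precisely those $g$-factors that need the stronger norm (slots $1,\ldots,j-1$) into $\mathcal{Y}_\infty$, while later slots $i > j$ only need $\mathcal{Y}_{p_i}$, which is satisfied by both $g_i$ and $b_i$. This is exactly the asymmetric structure of the hypotheses \eqref{assumptionL}, so the matching is tight. Once this is checked, summing the $2^{m+1}$ contributions and absorbing the combinatorial factor into $C_d$ completes the argument.
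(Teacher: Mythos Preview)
Your proposal is correct and follows essentially the same approach as the paper: reduce via \eqref{formDef} to bounding $\Lambda_{\mathcal{P}}(h_1,\ldots,h_{m+1})$, perform the Calder\'on--Zygmund splitting $h_i=g_i+b_i$ into $2^{m+1}$ pieces, handle the all-good term with $C_T$ and disjointness of $\mathcal{P}$, and handle each mixed term by applying the $j$-th line of \eqref{assumptionL} where $j$ is the first slot in which a $b$ appears (using that $\|\cdot\|_{\dot{\mathcal X}_{p_i}}$ is by definition the $\mathcal Y_{p_i}$-norm, so later $b_i$'s are admissible in the $\mathcal Y_{p_i}$ slots). The only cosmetic point is that in $\Lambda_L$ the first argument is localized to $L$ rather than $3L$, so the volume factor collapsing to $|L|$ (and hence $\sum_{L\subset Q}|L|\le|Q|$) is immediate.
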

		
		\begin{proof} We can assume supp$h_1 \subset Q$ and supp$h_i \subset 3Q$ for $2 \leq i \leq m+1$. By definition of $\Lambda_{\mathcal{P}}$ we have \begin{align*}\Lambda^{s_{Q}}(h_1, h_2,..., h_{m+1}) = \Lambda_{\mathcal{P}}&(h_1, h_2,..., h_{m+1}) \\ &+ \sum_{\substack{L \in \mathcal{P}\\ L \subset Q }}\Lambda^{s_{L}}(h_1\textbf{1}_{L}, h_2 \textbf{1}_{3L},..., h_{m+1}\textbf{1}_{3L}),\end{align*} so it is enough to estimate $\Lambda_{\mathcal{P}}(h_1, h_2, ..., h_{m+1})$.  For each $j$ perform the Calder\'{o}n-Zygmund (CZ) decomposition \eqref{CZdecomp} of $h_j$ with respect to $\mathcal{P}$. Then $h_j = b_j + g_j$ with $$b_j = \sum_{L \in \mathcal{P}}b_{jL}, \ \ \ b_{jL} := \left( h_j - (h_j)_{L} \right)\textbf{1}_L$$ and $$\|g_{j}\|_{\mathcal{Y}_{\infty}} \lesssim \|h_j\|_{\mathcal{Y}_{p_j}}, \ \ \ \|b_{j}\|_{\dot{\mathcal{X}}_{p_j}} \lesssim \|h_j\|_{\mathcal{Y}_{p_j}}.$$ We now decompose $\Lambda_{\mathcal{P}}(h_1, h_2,..., h_{m+1})$ using the CZ decomposition. If we let $f_{h_{i}}(0) = g_i$ and $f_{h_{i}}(1) = b_i$, we see that the form breaks up into the $2^{m+1}$ terms \begin{equation} \label{czDecomp}\sum_{0\leq k_1, ..., k_{m+1} \leq 1} \Lambda_{\mathcal{P}}(f_{h_{1}}(k_1), f_{h_{2}}(k_2), ..., f_{h_{m+1}}(k_{m+1})).\end{equation}
			
			\noindent From the definition of $\Lambda_{\mathcal{P}}$ and the pairwise disjointness of elements of $\mathcal{P}$, we see that finiteness of $C_T$ implies \begin{align*}|\Lambda_{\mathcal{P}}(g_1, g_2,... g_{m+1})| &\leq C_{T}\|g_1\|_{L^{r_1}}...\|g_m\|_{L^{r_m}}\|g_{m+1}\|_{L^{\alpha'}} \\ & \ \ \ \ \ \ \ \ \ \ \ \  +  C_{T}\sum_{\substack{L \in \mathcal{P} \\ L \subset Q}} \|g_1 \textbf{1}_L\|_{L^{r_1}}...\|g_{m+1} \textbf{1}_{3L}\|_{L^{\alpha'}} \\ &\leq c_{d}C_{T}|Q| \|g_1\|_{\mathcal{Y}_{r_1}} \|g_2\|_{\mathcal{Y}_{r_2}}... \|g_{m+1}\|_{\mathcal{Y}_{\alpha'}} \\ &\leq c_{d}C_{T}|Q|\|h_1\|_{\mathcal{Y}_{p_1}} \|h_2\|_{\mathcal{Y}_{p_2}}... \|h_{m+1}\|_{\mathcal{Y}_{p_{m+1}}}.\end{align*} Here we've used $$\frac{1}{r_1} + ... + \frac{1}{r_{m}} + \frac{1}{\alpha'} = \frac{1}{\alpha} + \frac{1}{\alpha'} = 1,$$ the disjointness of $L \in \mathcal{P}$, and the $\|g_j\|_{\mathcal{Y_\infty}}$ bound. We can now use \eqref{assumptionL} and the CZ estimates to control the remaining terms by the desired quantity. For example, 
			
			\begin{align*} |\Lambda_{\mathcal{P}}(b_1, g_2,..., g_{m+1})| \leq C_L |Q| \|b_1\|_{\dot{\mathcal{X}}_{p_1}}\|g_2\|_{\mathcal{Y}_{p_2}}...\|g_{m+1}\|_{\mathcal{Y}_{p_{m+1}}} \\ \leq c_{d}C_L |Q| \|h_1\|_{\mathcal{Y}_{p_1}}\|h_2\|_{\mathcal{Y}_{p_2}}...\|h_{m+1}\|_{\mathcal{Y}_{p_{m+1}}}\end{align*} 
			
			\noindent by the first estimate in \eqref{assumptionL} and the CZ properties. The other estimates are similar. We use the last estimate in \eqref{assumptionL} to control the single term $\Lambda_{\mathcal{P}}(g_1, g_2,...,g_m, b_{m+1})$: \begin{align*}|\Lambda_{\mathcal{P}}(g_1, g_2,..., g_{m}, b_{m+1})| \leq C_L|Q|\|g_1\|_{\mathcal{Y}_{\infty}}\|g_2\|_{\mathcal{Y}_{\infty}}...\|g_m\|_{\mathcal{Y}_{\infty}}\|b_{m+1}\|_{\dot{\mathcal{X}}_{p_{m+1}}} \\ \leq c_d C_L|Q|\|h_1\|_{\mathcal{Y}_{p_1}}\|h_2\|_{\mathcal{Y}_{p_2}}...\|h_m\|_{\mathcal{Y}_{p_m}}\|h_{m+1}\|_{\mathcal{Y}_{p_{m+1}}}.\end{align*} To bound the remaining terms, we choose from the first $m$ inequalities in \eqref{assumptionL}. If there are more than one $b_i$ terms in the particular piece of \eqref{czDecomp} we wish to control, we let $b_{n}$ be the first mean-zero term that appears (ordered from left to right) and then apply the $n$-th estimate in the list \eqref{assumptionL}, ordered from top to bottom. Then applying the CZ properties as above yields the desired estimates. \end{proof}
		
		\noindent The construction of the collection $\mathcal{S}$ now proceeds as in \cite{roughSparse}, with minor changes to account for the addition of more functions. For this reason we sketch the proof here, and send the reader to \cite{roughSparse} for the finer details. 
		
		We will construct the required sparse collection $\mathcal{S}$ iteratively by decomposing exceptional sets of the form \begin{equation}\label{excSet}E_{Q} := \left\{ x \in 3Q : \max_{i=1,...,m+1} \frac{M_{p_i}(f_i \textbf{1}_{3Q})(x)}{ (f_i)_{p_i, 3Q} } \geq C_d \right\},\end{equation} where $C_d$ is some constant depending on the dimension. Notice that if $C_d$ is chosen large enough, then by the maximal theorem we can assume \begin{equation}\label{sparseEst}|E_Q| \leq 2^{-cd}|Q|\end{equation} for some uniform $c > 0$. 
		
		We begin the argument by fixing $f_i \in L^{p_{j}}(\R^{d})$ with compact support, $i = 1,...,m+1$. We may assume we have chosen our dyadic lattice $\mathcal{D}$ so that there is $Q_0 \in \mathcal{D}$ with supp($f_1$)$\subset Q_0$ and supp($f_i$)$\subset 3Q_0$ for $i=2,...,m+1$, such that $s_{Q_0}$ is bigger than the largest scale occurring in the truncation of $\Lambda$. We let $S_0 = \{Q_0 \}$ and $E_0 = 3Q_0$, and then define (using definition \eqref{excSet}) $$E_1 := E_{Q_0}, \ \ \ \  S_1 := \text{ maximal cubes $L \in \mathcal{D}$ such that $9L\subset E_1$}.$$ It is easy to verify that $\mathcal{P}_1(Q_0) := S_1$ is a stopping collection with top $Q_0$, such that $|Q_0 \backslash E_1 | \geq (1- 2^{-cd})|Q_0|.$ By maximality and the definition of the exceptional set $E_{Q_0}$ we have $$ \|f_j\|_{\mathcal{Y}_{p_j} (\mathcal{P}_1(Q_0)) } \leq C_{d}(f_i)_{p_i, 3Q_0}, \ \ \ i = 1,...,m+1,$$ so applying Lemma \ref{sparseInit} yields 
		
		\begin{align*}|\Lambda(f_1,f_2,...,f_{m+1})| &= |\Lambda^{s_{Q_0}}(f_1\textbf{1}_{Q_0}, f_2 \textbf{1}_{3Q_0}, ..., f_{m+1}\textbf{1}_{3Q_0})| \\ &\leq \ C_{d}|Q_0|\prod_{i=1}^{m+1}(f_i)_{p_i, 3Q_0} \\ & \ \ \ \ \ \ \ \ \ \ \  + \sum_{\substack{L \in \mathcal{P}_1(Q_0) \\ L \subset Q_0 }}|\Lambda^{s_{L}}(f_1\textbf{1}_{L}, f_2 \textbf{1}_{3L},..., f_{m+1}\textbf{1}_{3L})|.\end{align*} This is the base case that sets up the recursive argument. For each $L \in \mathcal{P}_1(Q_0)$ we apply the above argument with $L$ in place of $Q_0$, defining $$E_2 = \bigcup_{L \in \mathcal{P}_1 (Q_0)} E_L$$ and $$S_2 := \text{ maximal cubes $R \in \mathcal{D}$ such that $9R \subset E_2$}.$$ For each $L \in \mathcal{P}_1(Q_0)$ we also define $$\mathcal{P}_2(L) = \{R \in S_2 : R \subset 3L\}, $$ which can be shown to be a stopping collection with top $L$ (the argument is the same as in \cite{roughSparse}). Now we apply Lemma \ref{sparseInit} to estimate each piece of the sum $$ \sum_{\substack{L \in \mathcal{P}_1(Q_0) \\ L \subset Q_0 }}|\Lambda^{s_{L}}(h_1\textbf{1}_{L}, h_2 \textbf{1}_{3L},..., h_{m+1}\textbf{1}_{3L})|,$$ using the stopping collection $\mathcal{P}_2(L)$ in the application to the term $$|\Lambda^{s_{L}}(f_1\textbf{1}_{L}, f_2 \textbf{1}_{3L},..., f_{m+1}\textbf{1}_{3L})|.$$ We then repeat the process just described at the next level. 
		
		Since we are working with finitely many scales the process eventually terminates with the desired sparse form bound. The sparse collection $\mathcal{S}$ consists of all cubes chosen in the various stopping collections constructed along the way. Condition \eqref{sparseEst} at each level guarantees the sparsity of these cubes. 
		
		\subsection{Multiplication Operators} Let $A_{\phi}: L^{r_1}\times...\times L^{r_m}\rightarrow L^{\alpha}$ be the multiplication operator described in Remark 2.1, with $\phi \in L^{\infty}$ and $\frac{1}{r_1}+...+\frac{1}{r_m} = \frac{1}{\alpha}.$ We let $$A(f_1,...f_{m+1}) = \langle A_{\phi}(f_1,...,f_{m}), f_{m+1} \rangle.$$ Using a simpler version of the stopping-time argument from Section 6.1, it is easy to see that $$|A(f_1,...,f_m, f_{m+1})| \lesssim_{\phi} \sup_{\mathcal{S}}\text{PSF}^{(1,1,...,1)}_{\mathcal{S}}(f_1,...,f_{m+1})$$ when $f_{i}\in L^{r_i}\cap L^{\infty}$ with compact support. Let $\mathcal{P}$ be any stopping collection with top $Q$ and $b = \sum_{L \in \mathcal{P}}b_L$ with $b_L$ supported on $L$, and observe that if \begin{align*}A_{\mathcal{P}}(b,g_2,...,g_{m+1})&:= A(b\textbf{1}_{3Q}, g_2\textbf{1}_{3Q},...,g_{m+1}\textbf{1}_{3Q}) \\ &  \ \ \ \ \ \ \ \ \ - \sum_{\substack{R \in \mathcal{P} \\ R \subset Q }} A(b\textbf{1}_{3R}, g_2 \textbf{1}_{3R},...,g_{m+1}\textbf{1}_{3R}) \end{align*} then in fact $$ A_{\mathcal{P}}(b,g_2,...,g_{m+1}) = 0. $$  Similarly $A_{\mathcal{P}}(g_1, b, g_{3},...,g_{m+1}),...,A_{\mathcal{P}}(g_1,...,g_{m}, b)$ all vanish. Hence we can repeat the argument given in the proof of Theorem \ref{abstractThm}, avoiding most of the technical complications. In particular, if $f_i = g_{i} + b_{i}$ with $g_i$ the `good' terms from the Calder\'{o}n-Zygmund decomposition relative to $\mathcal{P}$, then we can argue as in the proof of Lemma \ref{sparseInit} to show $$|A_{\mathcal{P}}(g_1,...,g_{m+1})|\lesssim \|\phi\|_{L^{\infty}}|Q|\|f_1\|_{\mathcal{Y}_{1}} \|f_2\|_{\mathcal{Y}_{1}}... \|f_{m+1}\|_{\mathcal{Y}_{1}}.$$ We just saw that all other terms of $A_{\mathcal{P}}(g_1 + b_1,...,g_{m+1}+b_{m+1})$ vanish, so we can easily run the stopping-time argument given in the last section to prove the claimed $\text{PSF}_{\mathcal{S}}^{(1,...,1)}$ bound.

		\section*{Appendix: Adjoint Forms} 
		
		We prove Proposition \ref{adjointDecomp}. The argument is almost identical to the proof of the linear variant from \cite{roughSparse}, so we only provide a sketch. Below we will call two dyadic cubes $L,R$ \textit{neighbors} and write $L \sim R$ if $7L\cap 7R \neq \emptyset$ and $|s_L - s_R|<8$. By separation property (i), if $L,R \in \mathcal{P}$ are distinct cubes with $7L\cap 7R \neq \emptyset$, $L \sim R$. 
		
		Let $\mathcal{P}$ be a stopping collection with top $Q$. Let $b = \sum_{L \in \mathcal{P}} b_L$ as in the last sections. We want to show that $$\Lambda_{\mathcal{P}}(g_1, b, g_3,...,g_{m+1}), ..., \Lambda_{\mathcal{P}}(g_1,...,g_{m}, b)$$ can be decomposed in the same way as $\Lambda_{\mathcal{P}}(b,g_2,...,g_{m+1})$, up to a controllable error term. We first analyze $\Lambda_{\mathcal{P}}(g_1,...,g_m, b).$ Below we assume $g_1$ is supported in $Q$ and $g_i$ is supported in $3Q$ for $i \geq 2.$ 
		
		As in the appendix in \cite{roughSparse}, split $b = b^{in} + b^{out}$ with $$b^{in} = \sum_{\substack{ L \in \mathcal{P} \\ 3L \cap 2Q \neq \emptyset }} b_L.$$ Fix any $R \subset Q$. Using the support of the kernel, it is easy to see that if $s < s_R$ then $$\int_{\R^{(m+1)d}}K_{s}(x_1,...,x_{m})g_1(x_1)\textbf{1}_{R}(x_1)g_2(x_2)...g_m(x_m)b^{out}(x_{m+1}) dx_1...dx_{m+1}$$ is identically zero. This is because dist(supp $b^{out}$, $R$) $\geq l(R)/2,$ since $b^{out}$ is supported on $L$ with $3L \cap 2Q = \emptyset$, but $R \subset Q$. Therefore
		
		\begin{align*}\Lambda_{\mathcal{P}}(g_1,...,g_m, &b^{out}) = \int_{\R^{(m+1)d}}\sum_{i,j}K^{ij}_{s_Q}(\vec{x})g_1(x_1)...g_m(x_{m})b^{out}(x_{m+1}) d\vec{x}\\ &- \sum_{\substack{R \in \mathcal{P} \\ R\subset Q}}\int_{\R^{(m+1)d}}\sum_{i,j}K^{ij}_{s_R}(\vec{x})g_1\textbf{1}_{R}(x_1)...g_{m}(x_m)b^{out}(x_{m+1}) d\vec{x} \end{align*}
		
		\noindent There is only one scale in the kernel in each integral, so we can estimate each term as in Lemma \ref{gbSS1} and sum over $R$ to see that $$|\Lambda_{\mathcal{P}}(g_1,...,g_{m}, b^{out})| \lesssim [K]_{p'} |Q| \|b\|_{\mathcal{X}_{1}}\|g_1\|_{\mathcal{Y}_{p}}...\|g_{m}\|_{\mathcal{Y}_{p}}.$$ 
		
		\noindent We now have to analyze $\Lambda_{\mathcal{P}}(g_1,...,g_m,b^{in}).$ As in the appendix of \cite{roughSparse}, it is enough to show that \begin{align}\label{inEst}\Lambda_{\mathcal{P}}(g_1,...,g_m, b^{in}) = &\left( \Lambda^{s_Q}(g_1,...,g_m, b^{in}) - \sum_{\substack{L \in \mathcal{P} \\ 3L \cap 2Q \neq \emptyset }} \Lambda^{s_L}(g_1,...,g_m,b_L) \right) \\ \nonumber & \ \ \ \ \ \ \ + \phi(g_1,...,g_m,b),\end{align} where $\phi$ is some error term satisfying the same estimate as $|\Lambda_{\mathcal{P}}(g_1,...,g_m, b^{out})|$. Then we can decompose the term in parenthesis as in Section 3, and use symmetry of the kernel to prove the desired estimates for $\Lambda_{\mathcal{P}}(g_1,...,g_m,b)$.
		
		The proof of \eqref{inEst} is almost the same as the linear case in \cite{roughSparse}. The main observation is that \begin{equation}\label{inDecomp}\sum_{\substack{R \in \mathcal{P} \\ R \subset Q }}\Lambda^{s_R}(g_1\textbf{1}_{R}, g_2,..., g_m, b^{in}) = \sum_{\substack{R \in \mathcal{P} \\ R \subset Q }}\sum_{\substack{ L \in \mathcal{P} \\ 3L \cap 3R \neq \emptyset \\ 3L \cap 2Q \neq \emptyset  }} \Lambda^{s_R}(g_1\textbf{1}_{R}, g_2,..., g_m, b_L),\end{equation} 
		
		\noindent since $\Lambda_R(g_1\textbf{1}_{R},g_2,..., g_m, b_L) = 0$ if $3L \cap 3R = \emptyset$. In particular this implies that $L \sim R$, so if $R$ is fixed then the number of terms in the second sum in \eqref{inDecomp} is bounded by a universal dimensional constant. It is easy to show that \begin{align*}|\Lambda^{s_R}(g_1\textbf{1}_{R}, g_2,..., g_m, b_L)& - \Lambda^{s_L}(g_1\textbf{1}_{R},g_2,..., g_m, b_L)| \\ &\lesssim |L|[K]_{p'}\|g_1\|_{\mathcal{Y}_{p}}...\|g_m\|_{\mathcal{Y}_{p}}\|b\|_{\mathcal{X}_{1}},\end{align*} since $|s_L - s_R| < 8$ (apply single-scale estimates, noting that the number of such estimates will be bounded by a dimensional constant). With the help of the separation properties we then can replace each $\Lambda^{s_R}$ in \eqref{inDecomp} by $\Lambda^{s_L}$, up to an admissible error term $\phi$. Now repeat the rest of the argument from \cite{roughSparse}, with trivial changes to account for the addition of more functions, to show that the remaining terms are of the form \eqref{inEst}. 
		
		Also observe that there was nothing special about the choice of the position of $b$ in the above argument, so the same reasoning applies to $\Lambda_{\mathcal{P}}(g_1, b, g_3,...,g_{m+1}), \Lambda_{\mathcal{P}}(g_1, g_2, b, g_4,...,g_{m+1})$, etc. This proves Proposition \ref{adjointDecomp}, up to the trivial steps we have omitted.

\Addresses


\begin{thebibliography}{9}
	
	\bibitem{BFP}
	{\scshape  Bernicot, F.; Frey, D.; Petermichl, S.} Sharp weighted norm estimates beyond
	Calderón-Zygmund theory, \textit{Anal. PDE} \textbf{9} (2016), no. 5, 1079-1113. 
	
	\bibitem{Buck}
	 {\scshape  Buckley, S.M.} Estimates for operator norms on weighted spaces and reverse Jensen inequalities, \textit{Trans. Amer. Math. Soc.}, \textbf{340} (1993), no. 1, 253-272. 
	 
	\bibitem{MultiWeightNon}
	{\scshape Bui, T.A; Conde-Alonso, J.M.; Duong, X.T.; Hormozi, M.} Weighted bounds for multilinear operators with non-smooth kernels, preprint 	arXiv:1506.07752

	
	\bibitem{CZ}
	{\scshape Calder\'{o}n, A.P; Zygmund, A.} On the existence of certain singular integrals, \textit{Acta Math.} \textbf{88} (1952), 85-139. 
	
	\bibitem{CZ2}
	{\scshape Calder\'{o}n, A.P.; Zygmund, A.} On singular integrals, \textit{Amer. J, Math.} \textbf{78} (1956), 289-309. 
	
	\bibitem{Christ}
	{\scshape Christ, M.} Weak type $(1,1)$ bounds for rough operators, \textit{Ann. of Math.} (2) \textbf{128} (1988), no. 1, 19-42.
	
	\bibitem{ChristRubio}
	{\scshape Christ, M.; Rubio de Francia, J.L.} Weak type $(1,1)$ bounds for rough operators II, \textit{Invent. Math}. \textbf{93} (1988), no. 1, 225-237. 
	
	\bibitem{CoifMey}
	{\scshape Coifman, R.;  Meyer, Y.} On commutators of singular integrals and bilinear singular integrals,\textit{ Trans. Amer. Math. Soc.} \textbf{212 }(1975), 315-331, 
	
	\bibitem{roughSparse} 
	{\scshape Conde-Alonso, J.M.; Culiuc, A.; Di Plinio, F.; Ou, Y.} A sparse
	domination principle for rough singular integrals, arXiv:1612.09201 (2016).

	\bibitem{CMP}
	{\scshape Cruz-Uribe, D.; Martell, J.M.; Pérez, C.} Weights, extrapolation and the theory of Rubio de
	Francia, \textit{Operator Theory: Advances and Applications}, vol. \textbf{215}, Birkhäuser/Springer Basel AG, Basel, 2011. 

	\bibitem{CruzNaibo}
	{\scshape Cruz-Uribe, D.; Naibo, V.} Kato-Ponce inequalities on weighted and variable Lebesgue spaces,\textit{ Differential Integral Equations} \textbf{29} (2016), no. 9/10, 801-836. 

	\bibitem{MultiSparse}
	{\scshape Culiuc, A.; Di Plinio, F.; Ou, Y.} Domination of Multilinear Singular Integrals by Positive Sparse Forms, arXiv:1603.05317 (2016).
	
	\bibitem{DyShift}
	{\scshape Culiuc, A.; Di Plinio, F.; Ou, Y.} Uniform sparse domination of singular integrals via dyadic shifts, preprint arXiv:1610.01958.
	
	\bibitem{DPLerner}
	{\scshape Di Plinio, F.; Lerner, A. K.} On weighted norm inequalities for the Carleson and Walsh-
	Carleson operator, \textit{J. Lond. Math. Soc.} (2) \textbf{90} (2014), no. 3, 654-674.

	\bibitem{Duo1}
	 {\scshape Duoandikoetxea, J.} Weighted norm inequalities for homogeneous singular integrals, \textit{Trans.
	 Amer. Math. Soc}. \textbf{336} (1993), 869-880. 
	 
	\bibitem{Duo2} {\scshape Duoandikoetxea, J.} Extrapolation of weights revisited: New proofs and sharp bounds, \textit{Journal of Functional Analysis}
	Volume \textbf{260}, Issue 6, 2011, 1886-1901. 
	 
	\bibitem{CFA}
	{\scshape Grafakos, L.} \textit{Classical Fourier Analysis}, Third edition, Graduate Texts in Mathematics,
	249, Springer, New York, 2014.

	\bibitem{MFA}
	{\scshape Grafakos, L.} \textit{Modern Fourier Analysis}, Third edition, Graduate Texts in Mathematics,
	250, Springer, New York, 2014.
	
	
	\bibitem{BiRough}
	
	{\scshape Grafakos, L.; He, D.; Honz\'{\i}k, P.} Rough Bilinear Singular Integrals, preprint  arXiv:1509.06099 (2015).
	
	\bibitem{GrMart}
	{\scshape Grafakos, L; Martell, J.M.} Extrapolation of weighted norm inequalities for multivariable operators, \textit{Journal of Geometric Analysis} \textbf{14} (2004), no. 1, 19-46. 


	\bibitem{GrTo}

	{\scshape Grafakos, L.; Torres, R. H.} Multilinear Calderon-Zygmund theory, \textit{Adv. Math.} 165
	(2002), 124–164. 
	
	\bibitem{Hof}
	{\scshape Hofmann, S.} Weak type (1,1) boundedness of singular integrals with nonsmooth kernels,
\textit{	Proc. Amer. Math. Soc}. \textbf{103 }(1988), 260 - 264 
	
	\bibitem{HRT}
	{\scshape Hyt\"{o}nen, T.; Roncal, L.; Tapiola, O.} Quantitative weighted estimates for rough homogeneous
	singular integrals, preprint arxiv:1510.05789, to appear, \textit{Israel J. Math.}

	\bibitem{KrauseLacey}
	{\scshape Krause, B.; Lacey, M. T.} Sparse Bounds for Maximally Truncated Oscillatory Singular Integrals, preprint, arXiv:1701.05249

	\bibitem{Lacey}
	{\scshape Lacey, M.T} An elementary proof of the A2 Bound, preprint arXiv:1501.05818, to appear in \textit{Israel
	J. Math.}

	\bibitem{LaceyArias}
	{\scshape Lacey, M.T.; Mena Arias, D.} The sparse T1 Theorem, preprint arXiv:1610.01531.
	
	\bibitem{LaceySpencer}
	{\scshape Lacey, M.T.; Spencer, S.} Sparse bounds for random discrete Carleson theorems, preprint arXiv:1609.06364.
	
	\bibitem{Lerner1}
	{\scshape Lerner, A.K.} A simple proof of the A2 conjecture, \textit{Int. Math. Res. Not.} (2013), no. \textbf{14},
	3159-3170. 
	
	\bibitem{Lerner2}
	{\scshape Lerner, A. K.} On pointwise estimates involving sparse operators, \textit{New York J. Math.} \textbf{22} (2016), 341-349. 
	\bibitem{DyCalc}
	
	{\scshape Lerner, A.; Nazarov, F.} \textit{Intuitive dyadic calculus: the basics}, preprint arXiv:1508.05639
	(2015).
	
	\bibitem{LernerMulti}
	
	 {\scshape Lerner, A. K.; Ombrosi, S.; P\'{e}rez, C.; Torres, R. H; Trujillo-González, R.} New
	 maximal functions and multiple weights for the multilinear Calderón-Zygmund theory, \textit{Adv. Math.} \textbf{220}
	 (2009), no. 4, 1222-1264. 
	
	\bibitem{KLi}
	{\scshape Li, K.} Sparse domination theorem for multilinear singular integral operators with $L^{r}$-
		H\"{o}rmander condition, preprint arXiv:1606.03340.
	
	\bibitem{Seeger}
	{\scshape Seeger, A.} Singular integral operators with rough convolution kernels, \textit{Jour. Amer.
	Math. Soc}., \textbf{9} (1996), 95 - 105. 
	\bibitem{Stein} 
	{\scshape Stein, E. M.} \textit{Harmonic analysis: real-variable methods, orthogonality, and oscillatory integrals},
	Princeton Mathematical Series, vol. 43, Princeton University Press, Princeton, NJ, 1993, With the
	assistance of Timothy S. Murphy, Monographs in Harmonic Analysis, III. MR 1232192
	
	\bibitem{Tao}
	
	{\scshape Tao, T.} The weak type $(1,1)$ of $L\log L$ homogeneous convolution operators, \textit{Indiana
	Univ. Math. J.} \textbf{48} (1999), 1547 - 1584. 
	
	\bibitem{Watson}
	
	{\scshape Watson, D.K.} Weighted estimates for singular integrals via Fourier transform estimates,
	\textit{Duke Math. J.} \textbf{60} (1990), 389-399. 

	
	
\end{thebibliography}
\end{document}